\newtheorem{thm}{Theorem}[section]
\newtheorem{corollary}[thm]{Corollary}
\newtheorem{lemma}[thm]{Lemma}
\newtheorem{proposition}[thm]{Proposition}
\theoremstyle{definition}
\newtheorem{definition}[thm]{Definition}
\newtheorem{example}[thm]{Example}
\theoremstyle{remark}
\newtheorem{remark}[thm]{Remark}
\def\1{{\rm (1)}}
\def\2{{\rm (2)}}
\def\3{{\rm (3)}}
\def\4{{\rm (4)}}
\def\5{{\rm (5)}}
  \newcounter{xenumi}
\begin{document}

\title[Ideal Containment in Commutative Rings]{Ideal Containment in Commutative Rings}
\thanks{$^{(\star)}$ Supported by KFUPM under DSR Research Grant \#: SB191014.}

\author[Abdeslam Mimouni]{Abdeslam Mimouni $^{(\star)}$}
\address{Department of Mathematics and Statistics, KFUPM, Dhahran 31261, KSA}
\email{amimouni@kfupm.edu.sa}

\date{\today}

\subjclass[2010]{13A15, 13A18, 13F05, 13G05, 13C20}

\keywords{big ideal, upper big deal, big ideal ring, reduction of ideals, basic ideal, C-ideal, pullbacks, trivial ring extension}

\begin{abstract}
 Let $R$ be a commutative ring with identity. An ideal $I$ of $R$ is said to be a big ideal (resp. an upper big ideal) if whenever $J\subsetneqq I$ (resp. $I\subsetneqq J$), $J^{n}\subsetneqq I^{n}$ (resp. $I^{n}\subsetneqq J^{n}$) for every $n\geq 1$; and $R$ itself is a big ideal ring provided that every ideal of $R$ is a big ideal.
 In this paper we study the notions of big ideals, upper big ideals and big ideal rings in different contexts of commutative rings such us integrally closed domains, pullbacks and trivial ring extensions etc. We show that the notions of big and upper big ideals are completely different. The notion of big ideal is correlated to the notion of basic ideal and the notion of upper big ideal is correlated to the notion of $\mathcal{C}$-ideals. We give a new characterization of Pr\"ufer domains via big ideal domains and we characterize some particular cases of pullback rings that are big ideal domains. Also we give some classes of big and upper big ideals in rings with zero-divisors via trivial ring extensions.
\end{abstract}
\maketitle

\section{Introduction}
The study of powers of ideals is an important tool in commutative ring theory. Recently, a lot of interest is devoted to comparing usual algebraic and symbolic powers of ideals. However, a very little is known about comparing algebraic powers of ideals. In this regard, P. Sharma (\cite{Sharma}) introduced the notion of big ideals and big ideal rings as follows: Let $R$ be a commutative ring with identity and $J$ an ideal of $R$. Then $J$ is said to be a big ideal if whenever $I$ is a proper subideal of $J$, $I^{n}$ is a proper subideal of $J^{n}$ for every $n\geq 1$, (that is, for every ideal $I$ of $R$, $I\subsetneqq J$ implies that $I^{n}\subsetneqq J^{n}$ for every $n$), and the ring $R$ is a big ideal ring provided that every ideal is a big ideal. This paper aims at studying some ideal containment in commutative rings. First, we define a ``dual" notion to the notion of a big ideal as follows: An ideal $J$ is said to be an upper big ideal if whenever $J$ is a proper subideal of an ideal $I$ of $R$, then $J^{n}$ is a proper subideal of $I^{n}$ (that is, for every ideal $I$ of $R$, $J\subsetneqq I$ implies that $J^{n}\subsetneqq I^{n}$ for every $n$). A first motivation of our study is that the two notions of big ideal and upper big ideal are completely different, but surprisingly, a commutative ring is a big ideal ring if and only if every ideal is an upper big ideal. A second motivation is the correlation of these two notions to a well-known classes of ideals. Namely, the notions of basic ideals, $\mathcal{C}$-ideals and Ratliff-Rush closed ideals. Recall that an ideal $J\subseteq I$ is a reduction of $I$ if $JI^{n}=I^{n+1}$ for some positive integer $n$. An ideal which has no reduction other than itself is called basic. The notion of reduction was introduced by Northcott and Rees with the purpose to contribute to the analytic theory of ideals in Noetherian local rings through reductions (\cite{NR}). In \cite{H1,H2}, Hays extended the study of reductions of ideals to more general contexts of commutative rings; particularly, Pr\"ufer domains and Noetherian rings (not necessarily local). His two main results assert that ``a domain is Pr\"ufer if and only if every finitely generated ideal is basic" \cite[Theorem 6.1]{H1} and ``in an integral domain, every ideal is basic if and only if it is a one-dimensional Pr\"ufer domain" \cite[Theorem 10]{H2}.  Moreover, he showed that most results on reductions of ideals do not extend beyond the class of Noetherian rings. A kind of ``dual'' notion to the basic ideal is the notion of $\mathcal{C}$-ideal. An ideal is called a $\mathcal{C}$-ideal if it is not a reduction of any larger ideal. In \cite{Sharma}, the author proved that if $J\subseteq I$ and $I^{n}=J^{n}$ for some positive integer, then $J$ is a reduction of $I$. In particular, a basic ideal is a big ideal. Also it is clear that if $I$ is a $\mathcal{C}$-ideal, then $I$ is an upper big ideal.\\
Recall that the Ratliff-Rush closure of an ideal $I$ is defined by $I^{*}=\displaystyle\bigcup_{n\geq 1}(I^{n+1}:_{R}I^{n})=\{x\in R| xI^{n}\subseteq I^{n+1} \text{ for some } n\geq 1\}$. In the case where $R$ is a Noetherian ring, the ideal $I^{*}$ is an intersting ideal, first studied by Ratliff and Rush in \cite{RR}. In \cite{HLS}, W. Heinzer, D. Lantz and K. Shah called it the Ratliff-Rush closure of $I$, or the Ratliff-Rush ideal associated with $I$. An ideal $I$ is said to be a Ratilff-Rush ideal, or Ratliff-Rush closed, if $I=I^{*}$. Among the interesting facts of this ideal is that for any regular ideal $I$ in a Noetherian ring $R$, there exists a positive integer $n$ such that for all $k\geq n$, $I^{k}=(I^{*})^{k}$, that is, all sufficiently high powers of a regular ideal are Ratliff-Rush ideals and a regular ideal is always
a reduction of its Ratliff-Rush closure in the sense that $I(I^{*})^{n}=(I^{*})^{n+1}$ for some positive integer $n$. Also the ideal $I^{*}$ is always between $I$ and the integral closure $I'$ of $I$, that is, $I\subseteq I^{*}\subseteq I'$, where $I':=\{x\in R/ x$ satisfies an equation of the form $x^{k}+ a_{1}x^{k-1}+\dots + a_{k}=0$, where $a_{i}\in I^{i}$ for each $i\in \{1, \dots, k-1\}\}$". Thus, in a Noetherian ring, an upper big regular ideal is a Ratliff-Rush closed ideal.

\section{Motivation and General Results}\label{GR}

\begin{definition} Let $R$ be a commutative ring with identity and $I$ an ideal of $R$. We say that:\\
\1 $I$ is a (lower) big ideal if whenever $J\subsetneqq I$, $J^{n}\subsetneqq I^{n}$ for every positive integer $n$. (\cite[Definition 1.2]{Sharma}).\\
\2 $I$ is an upper big ideal if whenever $I\subsetneqq J$, $I^{n}\subsetneqq J^{n}$ for every positive integer $n$.\\
\3 $R$ is a big ideal ring if every ideal  is a big ideal (equivalently,  every ideal is an upper big ideal).
\end{definition}
The notions of big ideals and upper big ideals are completely different as it is shown by the following examples.\\

\begin{example} Example of a big ideal which is not an upper big ideal. Let $\mathbb{Q}$ be the field of rational numbers and $X$ an indeterminate over $\mathbb{Q}$. Set $V=\mathbb{Q}(\sqrt[3]{2})[[X]]=\mathbb{Q}(\sqrt[3]{2})+M$, $R=\mathbb{Q}+M$, $W=\mathbb{Q}+\sqrt[3]{2}\mathbb{Q}$ and $I=X(W+M)$. Since $W^{2}=\mathbb{Q}(\sqrt[3]{2})$, $I^{2}=X^{2}(W^{2}+M)=X^{2}V=M^{2}$. However, $I\subsetneqq M$. Thus $I$ is not an upper big ideal. Now, let $J\subsetneqq I$ and suppose that $J^{s}=I^{s}$ for some positive integer $s\geq 2$. Since $I^{2}=M^{2}$, $I^{n}=M^{n}$ for every $n\geq 2$. Then $J^{s}=I^{s}=M^{s}$. Necessarily $J$ is not an ideal of $V$ (for if $J$ is an ideal of $V$, then $J=M$ as $M$ is a big ideal of $V$). Hence $J=X(H+M)$ for some $\mathbb{Q}$-subspace $H$ of $W$. Since $J\subsetneqq I$, $H\subsetneqq W$, and since $dim_{\mathbb{Q}}W=2$, $dim_{\mathbb{Q}}H=1$. Set $H=f\mathbb{Q}$ for some $f\in H$. Then $V=(M^{s}:M^{s})=(I^{s}:I^{s})=(J^{s}:J^{s})=(H^{s}:H^{s})+M=\mathbb{Q}+M=R$, which is a contradiction. Thus $J^{n}\subsetneqq I^{n}$ for every positive integer $n$, and therefore $I$ is a big ideal.
\end{example}

\begin{example} Example of an upper big ideal which is not a big ideal. Let $K$ be a field, $X$ an indeterminate over $K$, $R=K[[X^{3}, X^{4}]]$ and $I=X^{6}k[[X]]$. Clearly $I$ is an ideal of $R$, and $K[[X]]=R+XR+X^{2}R$. Let $W=R+XR$ and $J_{0}=X^{6}W$. Since $W\subsetneqq K[[X]]$, $J_{0}\subsetneqq I$. But since $W^{2}=k[[X]]$, $J_{0}^{2}=I^{2}$. Thus $I$ is not a big ideal. Now assume that $I\subseteq J$ and $I^{n}=J^{n}$ for some positive integer $n$. Then $(X^{-6}J)^{n}=K[[X]]$, and since $K[[X]]$ is integrally closed, $X^{-6}J\subseteq K[[X]]$. Thus $J\subseteq X^{6}K[[X]]=I\subseteq J$ and hence $I=J$. Therefore $I$ is an upper big ideal, as desired.
\end{example}

\begin{example} \1 Example of a big ideal which is not basic. Let $K$ be a field and $X$ an indeterminate over $K$. Set $R=k[[X^{2}, X^{3}]]$, $T=K[[X]]$ and $M=X^{2}K[[X]]=X^{2}T$. Clearly, $X^{2}R$ is a proper reduction of $M$ and so $M$ is not a basic ideal of $R$. Moreover, $R$ is a one-dimensional Noetherian local domain with maximal ideal $M$, and by \cite[Theorem 2.2]{Mi}, $R$ is a divisorial domain. Now let $J\subseteq M$ such that $J^{n}=M^{n}$ for some $n\geq 1$. If $JJ^{-1}=R$, then $J=fR$ for some $f\in J$. But then $R=(J^{n}:J^{n})=(M^{n}:M^{n})=K[[X]]$, which is absurd. Thus $JJ^{-1}\subseteq M$, and so $T=M^{-1}\subseteq (JJ^{-1})^{-1}=(J_{v}:J_{v})=(J:J)\subseteq R'=T$. Hence $(J:J)=T$ and so $J$ is an ideal of $T$. But since $M$ is a principal ideal of $T$, $M$ is a big ideal of $T$ and so $J=M$. Thus $M$ is a big ideal of $R$, as desired.\\

\2 Example of an upper big ideal which is not a $\mathcal{C}$-ideal. With the same domain $R$, let $I=X^{2}R$. Since $I$ is a reduction of $M$, $I$ is not a $\mathcal{C}$-ideal. Let $J$ be an ideal of $R$ such that $I\subseteq J$ and $I^{n}=J^{n}$ for some positive integer $n$. If $JJ^{-1}\subseteq M$, then $T=M^{-1}\subseteq (JJ^{-1})^{-1}=(J_{v}:J_{v})=(J:J)\subseteq (J^{n}:J^{n})=(I^{n}:I^{n})=R\subsetneqq T$, which is absurd. Thus $JJ^{-1}=R$ and so $J=fR$ for some $f\in J$. Again, $I^{n}=J^{n}$ implies that $X^{2n}R=f^{n}R$ and so $(X^{2}f^{-1})^{n}R=R$. Thus $(X^{2}f^{-1})^{n}\in U(R)$ and so $X^{2}f^{-1}\in U(R)$. Thus $I=X^{2}R=fR=J$, and hence $I$ is an upper big ideal as desired.
\end{example}
Recall that the Ratliff-Rush closure of an ideal $I$ in a commutative ring $R$ is the ideal given by $I^{*}=\displaystyle\bigcup_{n\geq 0}(I^{n+1}:_{R}I^{n})$ and an ideal $I$ is said to be a Ratliff-Rush ideal if $I=I^{*}$.
\begin{proposition} Let $R$ be an integral domain. Then every Ratliff-Rush ideal is an upper big ideal.
\end{proposition}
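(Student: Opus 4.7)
The plan is to prove the contrapositive statement: if $I$ is Ratliff-Rush and $I \subseteq J$ with $I^{n}=J^{n}$ for some $n\geq 1$, then $I=J$. Once this is established, the conclusion that every strict containment $I\subsetneqq J$ forces $I^{n}\subsetneqq J^{n}$ for all $n\geq 1$ is immediate.

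The key observation is that the equality $I^{n}=J^{n}$, combined with the containment $I\subseteq J$, pushes every element of $J$ into a colon ideal that is visibly contained in $I^{*}$. Concretely, pick any $x\in J$. Then $xJ^{n-1}\subseteq J^{n}=I^{n}$, and since $I\subseteq J$ gives $I^{n-1}\subseteq J^{n-1}$, I get
\[
xI^{n-1}\subseteq xJ^{n-1}\subseteq I^{n}.
\]
(For $n=1$ read $I^{n-1}=R$, so this just says $x\in I$ directly.) Hence $x\in (I^{n}:_{R}I^{n-1})\subseteq I^{*}$. Using the hypothesis that $I=I^{*}$, this yields $x\in I$, and varying $x$ shows $J\subseteq I$, so $I=J$.

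The argument is really a one-line use of the definition of $I^{*}$, so there is no serious obstacle; the only care needed is the boundary case $n=1$ (handled by the convention $I^{0}=R$) and the observation that the definition of the Ratliff-Rush closure $I^{*}=\bigcup_{n\geq 0}(I^{n+1}:_{R}I^{n})$ is precisely designed to absorb colon ideals of the form $(I^{n}:I^{n-1})$. The integral domain hypothesis in the statement is not actually invoked by this argument, which suggests the proposition might hold in greater generality, but for the present statement the proof is complete.
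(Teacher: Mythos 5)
Your proof is correct, and it takes a genuinely cleaner route than the paper's. The paper passes through the Ratliff--Rush closure $J^{*}$ of the larger ideal: it picks $x\in J^{*}$, says $xJ^{m}\subseteq J^{m+1}$ for some $m$, and then replaces $m$ by a larger exponent so that $J^{m}=I^{m}$ and $J^{m+1}=I^{m+1}$ can be invoked to conclude $x\in I^{*}=I$, hence $J\subseteq J^{*}\subseteq I$. That step tacitly uses the fact that $I^{s}=J^{s}$ forces $I^{m}=J^{m}$ for every $m\geq s$ (which does hold, since $I^{s}=J^{s}$ makes $I$ a reduction of $J$, so $J^{s+1}=J\cdot J^{s}=J\cdot IJ^{s-1}=I\cdot J^{s}=I^{s+1}$ and one inducts), but the paper leaves that propagation unspoken. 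Your argument sidesteps both $J^{*}$ and the propagation lemma: for $x\in J$ you directly compute $xI^{n-1}\subseteq xJ^{n-1}\subseteq J^{n}=I^{n}$, so $x\in(I^{n}:I^{n-1})\subseteq I^{*}=I$, using only the single given equality $I^{n}=J^{n}$. This is shorter, more self-contained, and your remark that the integral-domain hypothesis is never invoked applies equally to the paper's proof; the result indeed holds for an arbitrary commutative ring.
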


\begin{proof} Let $I$ be a Ratliff-Rush ideal and $J$ an ideal of $R$ such that $I\subseteq J$ and suppose that $I^{s}=J^{s}$ for some $s\geq 1$. Let $x\in J^{*}$. Then $xJ^{n}\subseteq J^{n+1}$ for some $n\geq 1$. Without loss of generality, we may assume that $n\geq s$. So $xI^{n}=xJ^{n}\subseteq J^{n+1}=I^{n+1}$ and so $x\in I^{*}=I$. Thus $J\subseteq J^{*}\subseteq I\subseteq J$ and therefore $I=J$, as desired.
\end{proof}
The next example shows that an upper big ideal needs not be a Ratliff-Rush ideal.

\begin{example} Let $V$ be a valuation domain with maximal ideal $M$ such that $M=M^{2}$. Since $M$ is maximal, $M$ is an upper big ideal of $V$. However, for every $n\geq 1$, $M^{n+1}=M^{n}=M$ and so $(M^{n+1}:M^{n})=(M:M)=V$. Thus $\tilde{M}=V$ and therefore $M$ is not a Ratliff-Rush ideal.
\end{example}

 The following diagram puts the big ideals and upper big ideals of an integral domain in the perspectives. First recall that an ideal $I$ of an integral domain $R$ is stable (resp. strongly stable) if it is invertible (resp. principal) in its endomorphisms ring, that is, $I(I:I^{2})=(I:I)$ (resp. $I=a(I:I)$); and $R$ is stable (respectively strongly stable) if every ideal of $R$ is stable (resp. strongly stable).\\

  $\begin{array}{ccccccc}
  
  &\text{Principal ideal} & \Longrightarrow &\text{strongly stable ideal} & &\\
  
  &\Downarrow &  &\Downarrow &  &\\
  
   &\text{Invertible ideal}  &\Longrightarrow & \text{stable ideal} &\Longrightarrow & \text{Ratliff-Rush ideal}\\
   
   &\Downarrow &  &   &  &\Downarrow\\
   
   &\text{ Basic ideal } &\Longrightarrow &  \text{big ideal } & & \text{upper big ideal} \\
   
   & & & & &\Uparrow\\
   
  & & & & &\text{$\mathcal{C}$-ideal }\\

 \end{array}$\\
 \bigskip
 
Notice that stable domains are big ideal domains (Proposition~\ref{stable}),  and big ideal domains coincide with upper big ideal domains. Also basic ideal domains (i.e. every ideal is basic) are exactly one-dimensional Pr\"ufer domains (\cite[Theorem 10]{H2}).
We have the following diagram:\\
\bigskip

 $\begin{array}{ccccccc}
  
  \text{PID} & \Longrightarrow &\text{strongly stable domain} & &\\
  
  \Downarrow &  &\Downarrow &  &\\
  
  \text{Dedekind domain}  &\Longrightarrow & \text{stable domain} &\Longrightarrow & \text{Ratliff-Rush domain}\\
   
  \Downarrow &  &\Downarrow   &  &\Downarrow\\
   
  \text{ Basic ideal domain} &\Longrightarrow &  \text{big ideal domain } & \Longleftrightarrow& \text{upper big ideal} \\
   
 \end{array}$\\
\bigskip


Before stating our first main theorem in this section, it is worth noticing that in \cite[Theorem 14.2]{Sharma}, the author proved that in a
Noetherian domain, if every ideal is integrally closed, then $R$ is a big ideal domain. However, it is well-known that an integral domain where every
ideal is integrally closed is a Pr\"ufer domain, and thus, under the Noetherian condition, $R$ is a Dedekind domain and so a big ideal domain.

 \begin{thm}\label{GR.1} Let $R$ be an integrally closed domain. The following are equivalent.\\
 \1 Every ideal is a big ideal (i.e. $R$ is a big ideal domain).\\
 \2 Every finitely generated ideal is a big ideal.\\
 \3 $R$ is a Pr\"ufer domain.
 \end{thm}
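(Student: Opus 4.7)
The plan is to prove the cycle $(1) \Rightarrow (2) \Rightarrow (3) \Rightarrow (1)$. The implication $(1) \Rightarrow (2)$ is immediate from the definition, since finitely generated ideals are in particular ideals.

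For $(3) \Rightarrow (1)$, I would reduce to the valuation case via localization. The key local fact is that every ideal of a valuation domain $V$ is big: given $I \subsetneq J$ in $V$, choose $x \in J \setminus I$; since the ideals of $V$ are totally ordered and $x \notin I$, we have $I \subseteq xV$, so $I = xA$ for the ideal $A := x^{-1}I$, which is proper in $V$ because $1 \in A$ would give $x \in I$. Then $I^n = x^n A^n$, so if $I^n = J^n$, the element $x^n \in J^n = x^n A^n$ forces $1 \in A^n$; but $A$ lies in the maximal ideal of $V$, so every power of $A$ is proper, a contradiction. To handle a general Pr\"ufer domain $R$, take $I \subsetneq J$ and choose $x \in J \setminus I$; since $(I : x) \subsetneq R$, pick a maximal ideal $M \supseteq (I : x)$, and observe that $x/1 \in J_M \setminus I_M$, so $I_M \subsetneq J_M$ in the valuation domain $R_M$. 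Applying the local case gives $(I_M)^n \subsetneq (J_M)^n$, and since powers commute with localization this yields $(I^n)_M \subsetneq (J^n)_M$ and hence $I^n \subsetneq J^n$.

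The main obstacle is $(2) \Rightarrow (3)$. My plan is to use Hays's theorem \cite[Theorem 6.1]{H1}, which asserts that a domain is Pr\"ufer if and only if every finitely generated ideal is basic; it therefore suffices to show that, under the standing hypotheses, every finitely generated big ideal is basic. Arguing by contradiction, suppose a finitely generated $J$ has a proper reduction $I \subsetneq J$ with $IJ^n = J^{n+1}$. Iterating the reduction relation gives $J^{n+k} = I^k J^n$, and in particular $J^{n+k} \subseteq I^k \subseteq J^k$ for every $k \geq 1$. The delicate step is to upgrade this double inclusion to a genuine equality $I^m = J^m$ for some $m$, contradicting the bigness of $J$. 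My intended tool is the integral closedness of $R$ together with the fact that $J$ is contained in the integral closure of $I$ (since $I$ is a reduction of the finitely generated $J$): for each valuation overring $V$ of $R$, the extension $IV \subseteq JV$ remains a reduction, hence $IV = JV$ in the valuation domain $V$, and writing $R = \bigcap V$ should then allow us to transfer this equality back to $R$. Controlling this pullback so as to conclude $I^m = J^m$, rather than merely $I = J$ inside some overring, is the place I expect the main technical difficulty.
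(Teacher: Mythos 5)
Your handling of $(1) \Rightarrow (2)$ is of course fine, and your proof of $(3) \Rightarrow (1)$ is correct and essentially the same as the paper's: in both cases one localizes at a maximal ideal where $x \in J \setminus I$ fails to lie in $I$ locally, uses the total ordering of ideals in the valuation domain $R_M$ to push $I R_M$ into $x M R_M$, and then derives $1 \in M^n R_M$ from $x^n \in J^n = I^n$. Choosing $M \supseteq (I:x)$ is a clean way to organize the choice the paper makes via $\mathrm{Max}(R,I)$.

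The gap you already suspected in $(2) \Rightarrow (3)$ is a real one, and it cannot be closed along the lines you sketch. If $I \subsetneq J$ is a proper reduction, it is simply not true in general that $I^m = J^m$ for some $m$ (for instance $(X^2) \subsetneq (X^2, X^3)$ in $k[[X^2, X^3]]$ is a reduction, yet $(X^2)^m = (X^{2m})$ is always strictly smaller than $(X^2, X^3)^m = X^{2m} k[[X]]$). Your proposed rescue via valuation overrings only shows that $J$ lies in the \emph{integral closure of the ideal} $I$, and integral closedness of the \emph{ring} $R$ does not force ideals to be integrally closed; so one cannot pass from $IV = JV$ for all $V$ to $I^m = J^m$ inside $R$. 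In fact, the intermediate claim you want — that an individual finitely generated big ideal in an integrally closed domain must be basic — is more than the theorem asserts and does not follow from the argument. (Something partial can be salvaged: if the reduction number $n$ is at least $2$, then $A := I J^{n-1} \subsetneq B := J^n$ are finitely generated with $A^2 = I^2 J^{2n-2} = I J^{2n-1} = J^{2n} = B^2$, so $B$ is not big; but the reduction-number-one case resists this trick, and that case genuinely occurs.)

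The paper's proof of $(2) \Rightarrow (3)$ sidesteps reductions entirely with a direct construction aimed at the $(u,u^{-1})$-lemma. For $x = a/b$ in the quotient field, it takes $I = (a^4, a^3 b, a b^3, b^4)$ and $J = (a,b)^4$, checks that $I^2 = J^2$ (a short monomial computation), and concludes $I = J$ from $(2)$; then $a^2 b^2 \in I$ gives, after dividing by $b^4$, a polynomial relation $\alpha_1 x^4 + \alpha_2 x^3 - x^2 + \alpha_3 x + \alpha_4 = 0$ with a unit middle coefficient, and the $(u,u^{-1})$-lemma (Anderson--Kwak) then yields that $R$ is Pr\"ufer. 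This is where you should redirect your effort: rather than trying to convert ``not basic'' into ``not big'' for a general finitely generated ideal, exhibit a \emph{specific} pair of finitely generated ideals $I \subsetneq J$ with $I^2 = J^2$ tailored to force the $(u,u^{-1})$-lemma condition.
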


\begin{proof} $\1 \Longrightarrow \2$ Trivial.\\
$\2\Longrightarrow \3$ Let $x={a\over{b}}\in K$ and consider the ideals $I=(a^{4}, a^{3}b, ab^{3}, b^{4})$ and $J=(a, b)^{4}=(a^{4}, a^{3}b, a^{2}b^{2}, ab^{3}, b^{4})$. Clearly $I\subseteq J$ and $I^{2}=J^{2}$.
Since $R$ is a big ideal domain, $I=J$. Thus $a^{2}b^{2}\in I$. Write $a^{2}b^{2}=\alpha_{1}a^{4}+\alpha_{2}a^{3}b+\alpha_{3}ab^{3}+\alpha_{4}b^{4}$ for some $\alpha_{1},
\alpha_{2}, \alpha_{3}$ and $\alpha_{4}$ in $R$. Dividing by $b^{4}$, we obtain an equation of the form $\alpha_{1}x^{4}+\alpha_{2}x^{3}-x^{2}+\alpha_{3}x+\alpha_{4}=0$. Thus,
by $(u, u^{-1})$-lemma (see, e.g.,  \cite[Theorem 6]{AnKw}), $R$ is a Pr\"ufer domain.\\
$\3\Longrightarrow \1$ Assume that $R$ is a Pr\"ufer domain and let $I\subsetneq J$ be ideals of $R$. Suppose that there is $n\geq 2$ such that $I^{n}=J^{n}$. Then $Max(R, I)=Max(R, J)$. Let $a\in J\setminus I$. Then there is $M\in Max(R, I)$ such that $a\not\in IR_{M}$. Since $R_{M}$ is a valuation domain, $IR_{M}\subsetneq aR_{M}$ and so $a^{-1}IR_{M}\subseteq MR_{M}$. Hence $IR_{M}\subseteq aMR_{M}$. But since $a^{n}\in J^{n}=I^{n}\subseteq I^{n}R_{M}\subseteq a^{n}M^{n}R_{M}$, $1\in M^{n}R_{M}$, which is absurd. Thus $I^{n}\subsetneq J^{n}$ for every $n\geq 2$ and therefore $R$ is a big ideal domain as desired.
\end{proof}

\begin{remark}\label{remark 1} Let $T$ is an overring of $R$ such that the conductor $(R:T)\not=0$. If $R$ is a big ideal domain, then so is $T$. Indeed, let $I\subseteq J$ be ideals of $T$ such that $I^{n}=J^{n}$ for some $n\geq 1$ and let $0\not=a\in (R:T)$. Then $aI\subseteq aJ$ are ideals of $R$ and $(aI)^{n}=(aJ)^{n}$. Thus $aI=aJ$ and therefore $I=J$.
\end{remark}

Combining the above remark and Theorem~\ref{GR.1}, we obtain the following two corollaries. By $R'$ and $\overline{R}$, we denote the integral closure and the complete integral closure of $R$ respectively.\\

\begin{corollary}\label{GR.2} Let $R$ be an integral domain such that $(R:R')\not=0$. If $R$ is a big ideal domain, then $R$ is a quasi-Pr\"ufer domain (that is, $R'$ is a Pr\"ufer domain).
\end{corollary}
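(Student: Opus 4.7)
The plan is to chain the preceding Remark~\ref{remark 1} with Theorem~\ref{GR.1}. The whole argument boils down to transferring the big ideal property from $R$ to its integral closure $R'$, and then invoking the characterization of integrally closed big ideal domains.

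First I would note that the hypothesis $(R:R')\neq 0$ is exactly the nonzero-conductor condition required to apply Remark~\ref{remark 1} to the overring $T=R'$. Since $R$ is assumed to be a big ideal domain, the remark yields immediately that $R'$ is a big ideal domain as well. This is the only place where the assumption $(R:R')\neq 0$ enters: without it, we cannot pull the ``big ideal'' condition up to $R'$.

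Next, I would observe that $R'$, being the integral closure of $R$ inside its quotient field, is itself an integrally closed domain. Thus $R'$ fits the hypothesis of Theorem~\ref{GR.1}, and the equivalence (1)$\Longleftrightarrow$(3) of that theorem applies: a big ideal domain which is integrally closed is a Pr\"ufer domain. Hence $R'$ is Pr\"ufer, which is precisely the definition of $R$ being quasi-Pr\"ufer, and the corollary follows.

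There is no real obstacle here; the entire statement is a one-line consequence of the two results just established. The only thing to verify carefully is that the remark does apply in the form we need, namely to the overring $T=R'$ rather than to an arbitrary overring, but this is automatic since $R'$ is in particular an overring of $R$ in $qf(R)$.
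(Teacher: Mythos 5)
Your proof is correct and follows exactly the route the paper intends: apply Remark~\ref{remark 1} with $T = R'$ to transfer the big ideal property to $R'$, then invoke Theorem~\ref{GR.1} since $R'$ is integrally closed to conclude $R'$ is Pr\"ufer. The paper states the corollary as an immediate consequence of ``combining the above remark and Theorem~\ref{GR.1}'' with no further argument, so your write-up matches the intended proof.
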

Recall that an integral domain $R$ is a Mori domain if $R$ satisfies the $acc$ on divisorial ideals (see, e.g. \cite{Bar1}). Noetherian and Krull domains are Mori domains. In \cite[Theorem 1.6.2 ]{Sharma}, Sharma proved that a Noetherian big ideal ring is of Krull dimension one. Our next corollary extend this result to some classes of Mori domains.\\
\begin{corollary}\label{GR.2'} Let $R$ be a Mori domain such that $(R:\overline{R})\not=0$. If $R$ is a big ideal domain, then $dim(R)=1$.
\end{corollary}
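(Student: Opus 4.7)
The plan is to apply the previous results to the complete integral closure $\overline{R}$ and then to invoke the classical structure theorem for Mori domains with nonzero conductor, which forces $\overline{R}$ to coincide with the integral closure $R'$ and to be a Krull domain; the dimension of $R$ can then be read off via the integral extension $R \subseteq R'$.

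First, since $(R:\overline{R})\neq 0$, Remark~\ref{remark 1} applied to the overring $T=\overline{R}$ shows that $\overline{R}$ is itself a big ideal domain. As $\overline{R}$ is completely integrally closed (hence integrally closed), Theorem~\ref{GR.1} then forces $\overline{R}$ to be a Pr\"ufer domain.

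Next, I would invoke the classical result from Mori-domain theory (see, e.g., \cite{Bar1}): when $R$ is Mori with $(R:\overline{R})\neq 0$, the integral closure and the complete integral closure coincide, $R'=\overline{R}$, and this common ring is a Krull domain. Granting this, $R'=\overline{R}$ is simultaneously Pr\"ufer and Krull, hence Dedekind (in a Krull domain every nonzero prime contains a height-one prime, while the Pr\"ufer condition forces each height-one prime to be maximal), so $\dim(R')=1$. Because $R\hookrightarrow R'$ is integral, one concludes $\dim(R)=\dim(R')=1$.

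The main obstacle is the reliance on the non-elementary Mori-theoretic input that $R'=\overline{R}$ is Krull once $(R:\overline{R})\neq 0$; without this, one would have to argue directly that the ACC on divisorial ideals passes from $R$ to $\overline{R}$ and that almost-integral elements are integral under the conductor hypothesis. Everything else is then a straightforward combination of Remark~\ref{remark 1}, Theorem~\ref{GR.1}, the standard fact that a Krull Pr\"ufer domain is Dedekind, and the preservation of Krull dimension under integral extensions.
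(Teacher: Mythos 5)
Your first two steps match the paper exactly: Remark~\ref{remark 1} makes $\overline{R}$ a big ideal domain, and Theorem~\ref{GR.1} (since $\overline{R}$ is completely integrally closed, hence integrally closed) makes it Pr\"ufer. Combined with Barucci's result that $\overline{R}$ is Krull when $R$ is Mori with $(R:\overline{R})\neq 0$, this gives $\overline{R}$ Dedekind, exactly as in the paper.

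The gap is in the step where you pass from $\dim(\overline{R})=1$ to $\dim(R)=1$. You do this by asserting that for a Mori domain with $(R:\overline{R})\neq 0$ the integral closure and the complete integral closure coincide, $R'=\overline{R}$, and then using that Krull dimension is preserved under the integral extension $R\subseteq R'$. That asserted coincidence is false in general. For instance, $R=\mathbb{Z}+X\mathbb{Q}[[X]]$ is a Mori domain (pullback of $\mathbb{Z}$ over the DVR $\mathbb{Q}[[X]]$ with $qf(\mathbb{Z})=\mathbb{Q}$) with $(R:\overline{R})=X\mathbb{Q}[[X]]\neq 0$, yet $R'=R$ whereas $\overline{R}=\mathbb{Q}[[X]]$; so $R'\subsetneq\overline{R}$. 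Hence the integral-extension argument does not apply to $R\subseteq\overline{R}$. The paper instead cites Barucci--Houston (\cite[Corollary 3.4]{BarHou}) directly for the equality $\dim(R)=\dim(\overline{R})$ for Mori domains with nonzero conductor, which is a statement about $\overline{R}$ (not $R'$) and does not rely on integrality. You would either need that citation, or you could repair the argument by working with $R'$ throughout: $(R:R')\supseteq(R:\overline{R})\neq 0$, so $R'$ is a big ideal Pr\"ufer domain, $R'$ is a fractional overring of a Mori domain and hence Mori, a Mori Pr\"ufer domain is Dedekind, and $R\subseteq R'$ is integral. Either fix works; the bare claim $R'=\overline{R}$ does not.
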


\begin{proof} Assume that $R$ is a Mori big ideal domain and $(R:\overline{R})\not=0$. By Remark~\ref{remark 1}, $\overline{R}$ is a big ideal domain and so a Pr\"ufer domain by Theorem~\ref{GR.1} since it is integrally closed. But since $\overline{R}$ is a Krull domain (\cite[Corollary 18]{Bar2}), $\overline{R}$ is a Dedekind domain. Hence $dim(R)=dim(\overline{R})=1$ (\cite[Corollary 3.4]{BarHou}), as desired.
\end{proof}

{\bf Open Question:} Let $R$ be a Mori domain which is a big ideal domain. Is $dim(R)=1$?\\

The next theorem deals with polynomial and power series rings over an integral domain.\\

\begin{thm}\label{GR.1'} Let $R$ be an integral domain and $X$ an indeterminate over $R$. Then $R[X]$ (resp. $R[[X]]$) is a big ideal domain if and only if $R$ is a field.
\end{thm}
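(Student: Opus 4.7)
The plan is to reduce both cases to Theorem~\ref{GR.1} via the $(u, u^{-1})$-lemma, and then to observe that $R[X]$ or $R[[X]]$ being Pr\"ufer forces $R$ to be a field. The ``if'' direction is immediate: when $R$ is a field, $R[X]$ is a PID and $R[[X]]$ is a DVR, so both are Dedekind domains and hence big ideal domains by the diagram in Section~\ref{GR}.

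For the converse, let $S$ denote either $R[X]$ or $R[[X]]$, and assume $S$ is a big ideal domain. I would transport the argument of $\2 \Longrightarrow \3$ in Theorem~\ref{GR.1} verbatim from $R$ to $S$. Namely, for any nonzero $f, g \in S$, the ideals $I = (f^4, f^3 g, f g^3, g^4)$ and $J = (f, g)^4 = (f^4, f^3 g, f^2 g^2, f g^3, g^4)$ of $S$ satisfy $I \subseteq J$ and $I^2 = J^2$ by a purely formal computation. The big ideal hypothesis forces $I = J$, so $f^2 g^2 \in I$, and dividing the resulting relation through by $g^4$ yields a polynomial equation in $u = f/g$ (in the fraction field of $S$) with coefficients in $S$ whose coefficient of $u^2$ is $-1$. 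By the $(u, u^{-1})$-lemma, $S$ is a Pr\"ufer domain.

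It remains to show that $R[X]$ or $R[[X]]$ being Pr\"ufer forces $R$ to be a field. Suppose for contradiction that $a \in R$ is a nonzero non-unit. Then $(a, X)$ is a proper ideal of $S$ (since $S/(a, X) \cong R/(a) \neq 0$), so it is contained in some maximal ideal $M$, and $S_M$ is a valuation domain. In $S_M$, either $a$ divides $X$ or $X$ divides $a$. Writing such a relation in the form $aq = Xp$ or $Xq = ap$ with $p, q \in S$ and $q \notin M$, and then extracting the constant term in $X$, forces $q(0) = 0$ in the first case (so $q \in (X) \subseteq M$) and, after cancelling $X$, yields $q = a p'$ in the second (so $q \in (a) \subseteq M$). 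Either conclusion places $q$ in $M$, a contradiction, so $R$ is a field.

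The main anticipated obstacle is cosmetic rather than conceptual: verifying that the manipulations $I \subseteq J$ and $I^2 = J^2$ and the ensuing $(u, u^{-1})$-equation go through unchanged over the ring $S = R[[X]]$, whose fraction field is not $K((X))$ in general but which nevertheless is an integral domain to which the $(u, u^{-1})$-lemma applies. Once that is checked, the argument is essentially a two-step reduction: big ideal domain $\Rightarrow$ Pr\"ufer $\Rightarrow$ $R$ is a field.
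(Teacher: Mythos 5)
There is a genuine gap in the ``only if'' direction of your argument. You plan to reuse the $(u,u^{-1})$-lemma step from the proof of Theorem~\ref{GR.1} to conclude that $S = R[X]$ (resp.\ $R[[X]]$) is a Pr\"ufer domain. But Theorem~\ref{GR.1} carries the standing hypothesis that the domain be integrally closed, and that hypothesis is doing real work: the condition ``every element of the quotient field is a root of a polynomial over the domain with a unit coefficient'' does \emph{not} by itself force the domain to be Pr\"ufer. The paper itself furnishes a counterexample in Section~\ref{Ex}: $k[[X^{2},X^{3}]]$ is a big ideal domain (it is strongly stable, hence big by Proposition~\ref{stable}), so the very computation of $(2)\Rightarrow(3)$ in Theorem~\ref{GR.1} produces, for every $u$ in its quotient field $k((X))$, a relation $\alpha_{1}u^{4}+\alpha_{2}u^{3}-u^{2}+\alpha_{3}u+\alpha_{4}=0$ over $k[[X^{2},X^{3}]]$ with unit coefficient $-1$; yet $k[[X^{2},X^{3}]]$ is a one-dimensional local domain with non-principal maximal ideal and hence is not Pr\"ufer. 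Since $R$ here is an arbitrary integral domain, $R[X]$ and $R[[X]]$ need not be integrally closed, so the $(u,u^{-1})$-lemma is not available and your first reduction (big ideal domain $\Longrightarrow$ Pr\"ufer) is unjustified.

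The paper's proof sidesteps Pr\"ufer theory entirely and instead exploits the grading. It fixes a single $0\neq a\in R$, forms $J=(a^{4},a^{3}X,aX^{3},X^{4})\subseteq I=(a,X)^{4}$, observes $J^{2}=I^{2}$ so $J=I$ and hence $a^{2}X^{2}\in J$, and then \emph{compares coefficients of $X^{2}$} in the identity $a^{2}X^{2}=a^{4}f_{1}+a^{3}Xf_{2}+aX^{3}f_{3}+X^{4}f_{4}$. This yields $a^{2}=a^{4}a_{2}+a^{3}b_{1}$ in $R$, hence $1=a(aa_{2}+b_{1})$ and $a\in U(R)$. No integral closure, no localization at maximal ideals, no passage to the quotient field. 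For the record, your ``if'' direction is correct, and your second step (that $R[X]$ or $R[[X]]$ Pr\"ufer forces $R$ to be a field) is also fine as stated; the appeal to the $(u,u^{-1})$-lemma is the only problem, but it is not a cosmetic one.
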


\begin{proof} We mimic the proof given in Theorem~\ref{GR.1}. Assume that $R[X]$ (resp. $R[[X]])$ is a big ideal domain and let $0\not=a\in R$.  Consider the ideals $J=(a^{4}, a^{3}X, aX^{3}, X^{4})$ and 
$I=(a, X)^{4}=(a^{4}, a^{3}X, a^{2}X^{2}, aX^{3}, X^{4})$. Clearly $J\subseteq I$ and $J^{2}=I^{2}$. So $J=I$; and thus $a^{2}X^{2}\in J$. Write $a^{2}X^{2}=a^{4}f_{1}+a^{3}Xf_{2}+aX^{3}f_{3}+X^{4}f_{4}$ for some $f_{1}, f_{2}, f_{3}$ 
and $f_{4}$ in $R[X]$ (resp. $R[[X]]$). Set $f_{1}=a_{0}+a_{1}X+a_{2}X^{2}+X^{3}g_{1}$ and $f_{2}=b_{0}+b_{1}X+X^{2}g_{2}$ for some $g_{1}, g_{2}$ in  $R[X]$ (resp. $R[[X]]$). Then, by equalizing the coefficients of $X^{2}$ in both sides, we obtain $a^{2}=a^{4}a_{2}+a^{3}b_{1}$. Thus $1=a(aa_{2}+b_{1})$. Hence $a\in U(R)$ and therefore $R$ is a field. 
\end{proof}
In \cite[Lemma 7.2]{Sharma}, it was proved that an invertible ideal of an integral domain is a big ideal, and in particular, a Dedekind domain is a big ideal domain.
The next proposition extends this result to stable domains. However, Example~\ref{Ex.1} shows that a (strongly) stable ideal is not necessarily a big ideal.  

\begin{proposition}\label{stable} Every stable domain is a big ideal domain.
\end{proposition}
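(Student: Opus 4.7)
The plan is to argue by contradiction: given ideals $J\subsetneqq I$ of a stable domain $R$ with $I^n = J^n$ for some $n\ge 1$, I will work inside the overring $E := (I:I)$ and derive $J = I$.

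First I would establish a preparatory observation: for any stable ideal $A$ with endomorphism ring $B = (A:A)$, one has $(A^k:A^k) = B$ for every $k\ge 1$. The inclusion $B\subseteq (A^k:A^k)$ is automatic. For the reverse, if $A^{-1} = (B:A)$ denotes the inverse of $A$ in $B$, stability gives $A\cdot A^{-1} = B$, hence $(AA^{-1})^k = B$, and any $x\in (A^k:A^k)$ satisfies $xB = x(AA^{-1})^k = xA^k(A^{-1})^k \subseteq A^k(A^{-1})^k = B$, so $x\in (B:B) = B$. Applying this to the stable ideals $I$ and $J$, together with $I^n = J^n$, yields $(J:J) = (J^n:J^n) = (I^n:I^n) = (I:I) = E$. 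In particular $J$ is already an ideal of $E$, so $JE = J$.

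Next I would form the fractional $E$-ideal $A := J\,I^{-1}$, where $I^{-1}$ is the inverse of $I$ in $E$. Since $J\subseteq I$, we have $A\subseteq II^{-1} = E$; since $I^{-1}$ is an $E$-module, $AE = J\,I^{-1}E = JI^{-1} = A$, so $A$ is an (integral) ideal of $E$. Raising to the $n$-th power, $A^n = J^n(I^{-1})^n = I^n(I^{-1})^n = (II^{-1})^n = E$. Now the key observation: if $A$ were a proper ideal of $E$, it would lie in some maximal ideal $M$ of $E$, and then $A^n\subseteq M\subsetneqq E$, contradicting $A^n = E$. Therefore $A = E$, i.e., $J\,I^{-1} = E$. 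Multiplying by $I$ gives $JE = EI = I$, and since $J$ is an $E$-module we conclude $J = JE = I$, contradicting $J\subsetneqq I$.

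The main obstacle is making sure the passage from ``$I^n=J^n$'' to ``$J$ is an $E$-module'' is justified; this is exactly where I need stability of $J$ (not just of $I$), via the identification $(J^n:J^n) = (J:J)$. Once this is in hand, the maximal-ideal argument on $E$ closes the proof cleanly. I expect no difficulty with the existence of maximal ideals in $E$, since $E$ is a subring (with identity) of the quotient field.
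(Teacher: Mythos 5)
Your proof is correct, but it takes a genuinely different route from the paper. The paper fixes $I\subsetneq J$ with $I^n=J^n$, sets $T=(I:I)$ (the endomorphism ring of the \emph{smaller} ideal), observes that $JI^{n-1}=I^n$ is a reduction relation, and cancels $I^{n-1}$ by multiplying through by $(T:I)^{n-1}$ and invoking $I(T:I)=T$. This yields $JT=I$ directly, whence $J\subseteq JT=I$, contradicting $I\subsetneq J$. That argument uses stability of a single ideal. You instead work with $E=(I:I)$ for the \emph{larger} ideal, first establishing the auxiliary fact $(A^k:A^k)=(A:A)$ for every stable ideal $A$ and applying it to \emph{both} $I$ and $J$ to force $(J:J)=(I:I)=E$; you then pass to the $E$-ideal $A=JI^{-1}$, compute $A^n=E$, and close with a maximal-ideal argument in $E$ to get $A=E$ and hence $J=JE=I$. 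Both are valid; the paper's computation is shorter and needs stability of only one ideal, while your version isolates the useful lemma that a stable ideal and its powers share the same endomorphism ring, and replaces the direct cancellation with a cleaner structural step (an integral ideal whose power is the unit ideal must be the unit ideal).
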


\begin{proof} Suppose that there are ideals $I\subsetneq J$ of $R$ such that $I^{n}=J^{n}$ for some $n\geq 2$ and set $T=(I:I)$. Then
$I^{n}=II^{n-1}\subseteq JI^{n-1}\subseteq J^{n}=I^{n}$ and so $JI^{n-1}=I^{n}$. Composing both sides by $(T:I)$ ($n-1$-times) and using the fact that $I(T:I)=T$,
we obtain $JT=I$. Thus $I\subsetneq J\subseteq JT=I$, which is a contradiction. It follows that $I^{n}\subsetneq J^{n}$ for every $n\geq 1$ and therefore $R$ is a big ideal domain.
\end{proof}

For a commutative ring $R$ with identity and a positive integer $s$, we  denote by $R^{(s)}$ the product ring of $s$ copies of $R$, that is, $R^{(s)}=R\times \dots\times R=\{(x_{1}, \dots, x_{s})| x_{i}\in R\}$. Notice that  $R^{s}=R$ ($R^{s}$ here is simply the set of all finite sums of products of $s$ elements of $R$); and if $T$ is a commutative ring which is an $R$-module and $W$ is an $R$-submodule of $T$, we denote by 
$W^{q}$ the $R$-submodule of $T$ consisting of all finite sums of product of $q$ elements of $W$ (their products is considered as a product of elements of $T$).\\

\begin{proposition} Let $R$ be an integral domain, $I$ be an ideal of $R$ and $T=(I:I)$.\\
\1 Assume that $I$ is strongly stable. Then $I$ is a big ideal if and only if for every $R$-submodule $W$ of $T$, $W^{n}\subsetneq T$ for every $n\geq 2$.\\
\2 Assume that $I$ is stable and let $\{a_{1}, \dots, a_{s}\}$ be a minimal generating set of $I$. Then $I$ is a big ideal if and only if for every $R$-submodule $W$ of $T^{(s)}$, $W^{q}=T^{(s)}$ for some $q\geq 1$ implies that $W=T^{(s)}$.
\end{proposition}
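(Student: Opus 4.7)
The plan is to parallel the treatment of the strongly stable case (1) using a minimal generating set to set up the analogue for the stable case (2), so that part (1) becomes essentially the $s=1$ instance of part (2) (since $T^{(1)} = T$).

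For (1), I would invoke strong stability to write $I = aT$ for some $a \in I$. Multiplication by $a$ is an injection of $T$ into its fraction field, so it gives a bijection between $R$-submodules $W$ of $T$ and $R$-submodules $aW$ of $aT = I$; since $R$-submodules of $I$ are exactly the ideals of $R$ contained in $I$, this yields a bijection between ideals $J \subseteq I$ and $R$-submodules $W \subseteq T$, with $J = I \iff W = T$. Computing powers, $I^n = a^n T^n = a^n T$ (since $T$ is a ring, $T^n = T$), whereas $J^n = a^n W^n$. Cancelling $a^n$ in the integral domain $R$ gives $J^n = I^n \iff W^n = T$. The equivalence in (1) is immediate: $I$ is big precisely when every proper $R$-submodule $W \subsetneq T$ satisfies $W^n \subsetneq T$ for all $n \geq 2$.

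For (2), I would replace $T$ by $T^{(s)}$ and the single element $a$ by the tuple $(a_1,\ldots,a_s)$. Consider the surjective $T$-module homomorphism $\phi \colon T^{(s)} \to I$, $\phi(t_1,\ldots,t_s) = \sum_i t_i a_i$, which is surjective because $\{a_i\}$ generates $I$ as an $R$-ideal, hence as a $T$-module (recall $TI = I$). To each ideal $J \subseteq I$ of $R$ I would associate the $T$-submodule $W_J = \phi^{-1}(J) \subseteq T^{(s)}$, which is in particular an $R$-submodule, and satisfies $W_J = T^{(s)} \iff J = I$. The equivalence then reduces to showing $J^n = I^n$ for some $n \geq 1$ if and only if $W_J^q = T^{(s)}$ for some $q \geq 1$, where $W_J^q$ refers to componentwise multiplication inside the product ring $T^{(s)}$.

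The principal obstacle is precisely that $\phi$ is not a ring homomorphism: a product in $R$ of two elements $\sum t_i a_i$ and $\sum t'_j a_j$ of $J$ involves all cross terms $t_i t'_j a_i a_j$, whereas componentwise multiplication in $T^{(s)}$ only sees the diagonal $(t_i t'_i)$. Bridging the two multiplications is where the stability hypothesis $I(T\colon I) = T$ enters. The plan is to choose $b_k \in I$ and $c_k \in (T\colon I)$ with $\sum_k b_k c_k = 1$ and expand each $b_k = \sum_i \tau_{k,i} a_i$ with $\tau_{k,i} \in T$; by combining these data with the equations $J^n = I^n$ (which force each monomial $a_{i_1}\cdots a_{i_n}$ to lie in $J^n$), one can manufacture tuples in $W_J$ whose componentwise product equals $(1,\ldots,1)$, producing $W_J^q = T^{(s)}$ for some $q$. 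Minimality of $\{a_1,\ldots,a_s\}$ will be essential to control the preimages under $\phi$ and guarantee the strict inclusion $W_J \subsetneq T^{(s)}$ whenever $J \subsetneq I$.
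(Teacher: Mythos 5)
Your treatment of part (1) reproduces the paper's argument almost exactly: the paper too writes $I=aT$, maps an $R$-submodule $W\subseteq T$ to $J=aW$ (and conversely $J\subseteq I$ to $W=a^{-1}J=\{x\in T: xa\in J\}$), and reads off $J^{n}=a^{n}W^{n}$ against $I^{n}=a^{n}T$. That part is fine.

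For part (2) you also pick the same objects the paper uses, namely the $T$-linear surjection $\phi:T^{(s)}\to I$, $\phi(t_{1},\dots,t_{s})=\sum t_{i}a_{i}$, and the submodule $\phi^{-1}(J)$, so you are on the paper's track. But the way you organize the equivalence has a genuine gap. You say you ``reduce'' the proposition to: $J^{n}=I^{n}$ for some $n$ iff $W_{J}^{q}=T^{(s)}$ for some $q$, for $W_{J}=\phi^{-1}(J)$. This only settles the converse direction. In the forward direction one must show: if $I$ is big, then \emph{every} $R$-submodule $W\subseteq T^{(s)}$ with $W^{q}=T^{(s)}$ equals $T^{(s)}$; but not every such $W$ is of the form $\phi^{-1}(J)$ (those all contain $\ker\phi$, which is typically nonzero when $s\geq 2$). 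The paper handles this direction by going the other way: starting from $W$, it forms $J$ as the $R$-ideal generated by $\phi(W)=\{\sum x_{i}a_{i}:(x_{i})\in W\}$, argues $J^{q}=I^{q}$, concludes $J=I$ by bigness, and then deduces $W=T^{(s)}$. Your $J\mapsto W_{J}$ dictionary simply does not see those $W$, so as written the forward direction is not proved.

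There is a second problem in your sketch of the converse. You propose to ``manufacture tuples in $W_{J}$ whose componentwise product equals $(1,\dots,1)$'' and take that as establishing $W_{J}^{q}=T^{(s)}$. That is not enough: $W_{J}^{q}$ is only an $R$-submodule (or a $T$-submodule under the diagonal action if $W_{J}$ happens to be one), so $(1,\dots,1)\in W_{J}^{q}$ yields at best the diagonal $T(1,\dots,1)$, not all of $T^{(s)}$. To get $W_{J}^{q}=T^{(s)}$ you would need, for each $i$, the idempotent $e_{i}=(0,\dots,1,\dots,0)$ (or an equivalent spanning family) inside $W_{J}^{q}$, and your stability-based plan does not explain how the cross terms are to be killed off to isolate these. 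The paper itself is terse here (it asserts ``since $J^{q}=I^{q}$, $W^{q}=T^{(s)}$'' after noting $JT=I$ from stability), so you should not feel bad about finding this step delicate; but the $(1,\dots,1)$ route as you describe it cannot close it.

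In short: part (1) is correct and matches the paper; part (2) uses the right construction but the forward direction is set up in the wrong direction (you need $W\mapsto J$, not $J\mapsto W_{J}$), and the key power computation in the converse, the one step where stability must really do work, is asserted via a plan that, even if executed, would produce only the diagonal rather than all of $T^{(s)}$.
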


\begin{proof} \1 Set $I=aT$ and assume that $I$ is a big ideal. Let $W$ be an $R$-submodule of $T$ and set $J=aW$. Then $J\subseteq I$ and so for every $n\geq 2$, $J^{n}=a^{n}W^{n}\subsetneq I^{n}=a^{n}T$. Thus $W^{n}\subsetneq T$ as desired.\\
Conversely, let $J\subsetneq I$ and set $W=\{x\in T| xa\in J\}$. Clearly $W$ is an $R$-submodule of $T$ and $J=aW$. Since $W^{n}\subsetneq T$, $J^{n}=a^{n}W^{n}\subsetneq a^{n}T=I^{n}$ and therefore $I$ is a big ideal of $R$.\\

\2 Assume that $I$ is stable and let $\{a_{1}, \dots, a_{s}\}$ be a minimal generating set of $I$. Suppose that $I$ is a big ideal and let $W\subseteq T^{(s)}$ be an $R$-module such that $W^{q}=T^{(s)}$ for some positive integer $q$. Let $J$ be the ideal of $R$ generated by all elements of the form $\displaystyle\sum_{i=1}^{i=s}x_{i}a_{i}$ where $(x_{1}, \dots, x_{s})\in W$. Clearly $J\subseteq I$ and  $J^{q}=I^{q}$. Thus $J=I$ and so $W=T^{s}$, as desired.\\
Conversely, let $J\subseteq I$ such that $J^{q}=I^{q}$ for some $q\geq 1$. Then $J$ is a reduction of $I$ and since $I$ is stable, $JT=I$. Let $W=\{(x_{1}, \dots, x_{s})\in T^{(s)}|\displaystyle\sum_{i=1}^{i=s}x_{i}a_{i}\in J\}$. Then it is easy to check that $W$ is an $R$-submodule of $T^{(s)}$, and since $J^{q}=I^{q}$, $W^{q}=T^{(s)}$. Thus, by hypothesis, $W=T^{(s)}$ and so $J=I$. Therefore $I$ is a big ideal as desired.
\end{proof}
\section{pullback constructions}\label{Pull}

Let $T$ be a domain, $M$ a maximal ideal of $T$, $K$ its residue field, $\phi:T\longrightarrow K$ the canonical surjection, $D$ a proper subring of $K$, and $k:=qf(D)$. Let $R$ be the pullback issued from the following diagram of canonical homomorphisms:
\[\begin{array}{cccl}
                    &R:=\phi^{-1}(D) & \longrightarrow                       & D\\
(\ \square\ )       &\downarrow         &                                       & \downarrow\\
                    &T                  & \stackrel{\phi}\longrightarrow     & K=T/M.
\end{array}\]
Clearly, $M=(R:T)$ and $D\cong R/M$. For ample details on the ideal structure of $R$ and its ring-theoretic properties, we refer the reader to \cite{ABDFK, AD, BG, BR, F, FG, GH, HH, Kab, KLM}; and for more details on the constructions $D+XK[X]$ and rings between $D[X]$ and $K[X]$, we refer to \cite{CMZ, Zaf1, Zaf2, Zaf3} .

\begin{thm}\label{Pull.1} For the diagram of type $(\square)$,\\
\1 If $R$ is a big ideal domain, then $D$ and $T$ are big ideal domains.\\
\2 Assume that $R$ is a big ideal domain. If $K$ is algebraic over $k$, then $[K:k]=2$.\\
\3 Assume that $T=V$ is a valuation domain. Then $R$ is a big ideal domain if and only if $D$ is a big ideal domain and for every $D$-submodules $H\subsetneq W\subsetneq K$, $H^{n}\subsetneq W^{n}\subsetneq K$ for every $n\geq 1$.\\
\4 Assume that $T$ is a valuation domain, $D$ is a conducive domain and $qf(D)=K$. Then $R$ is a big ideal domain if and only if $D$ is a big ideal domain.
\end{thm}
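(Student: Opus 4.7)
All four parts share a translation principle supplied by the pullback. Ideals of $R$ lying above $M$ are in bijection with ideals of $D$ via $\phi$, while any ideal of $R$ contained in $M$ is of the form $I=m\,\phi^{-1}(W)$ for some $0\neq m\in M$ and some $D$-submodule $W$ of $K$, which lies in $R$ because $M=(R:T)$. Whenever $1\in W$, the identity $\phi^{-1}(W)^{n}=\phi^{-1}(W^{n})$ holds, because the left side has image $W^{n}$ under $\phi$ and contains $M$ via $M=M\cdot 1\subseteq M\cdot\phi^{-1}(W)^{n-1}\subseteq \phi^{-1}(W)^{n}$. So powers of ideals of $R$ decompose into powers of ideals of $D$ (above $M$) and powers of $D$-submodules of $K$ (below $M$).

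\textbf{Part (1).} For $T$: $M=(R:T)\neq 0$, so Remark~\ref{remark 1} gives that $T$ is a big ideal domain. For $D$: if $I'\subsetneq J'$ in $D$ satisfy $(I')^{n}=(J')^{n}$, then $I'\neq 0$ because $D$ is a domain, so the lifts $I=\phi^{-1}(I')$ and $J=\phi^{-1}(J')$ strictly contain $M$. Any element of $I\setminus M$ is a unit of $T$, giving $IM=M$, and iterating yields $M\subseteq I^{n}$ and $M\subseteq J^{n}$. Hence $I^{n}=\phi^{-1}((I')^{n})=\phi^{-1}((J')^{n})=J^{n}$, contradicting $R$ big.

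\textbf{Part (2).} Assume $R$ is big and, for contradiction, $[K:k]\geq 3$. Pick a finite subextension $L\subseteq K$ of $k$ with $d:=[L:k]\geq 3$, and a $k$-hyperplane $W$ of $L$ containing $1$. Then $W\subsetneq W^{2}$: otherwise $W$ would be a $k$-subfield of $L$, forcing $[L:W]=d/(d-1)$, which is not an integer for $d\geq 3$. Since $W$ has $k$-codimension one in $L$, we conclude $W^{2}=L$. Fix $0\neq m\in M$ and set $J=m\,\phi^{-1}(L)$ and $I=m\,\phi^{-1}(W)$; both are ideals of $R$ (as $m\in M=(R:T)$ and $D\subseteq L$, so $R\subseteq\phi^{-1}(L)$). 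Then $I\subsetneq J$ from $W\subsetneq L$, and $I^{2}=m^{2}\phi^{-1}(W^{2})=m^{2}\phi^{-1}(L)=J^{2}$ by the translation principle, contradicting $R$ big. The key step, the $d/(d-1)$ dimension count, is clean and avoids having to produce a global $W\subsetneq K$ with $W^{n}=K$.

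\textbf{Parts (3) and (4).} The forward direction of (3) applies the same construction: a pair $H\subsetneq W\subsetneq K$ with $H^{n}=W^{n}$ gives $m\,\phi^{-1}(H)\subsetneq m\,\phi^{-1}(W)$ with equal $n$-th powers, and a pair with $W^{n}=K$ gives $m\,\phi^{-1}(W)\subsetneq mT$ with equal $n$-th powers; both contradict $R$ big. For the converse, take $I\subsetneq J$ in $R$ with $I^{n}=J^{n}$; the case both contain $M$ reduces to (1). Otherwise, extend to $V$: $(IV)^{n}=(JV)^{n}$, and since $V$ is valuation we may extract a common scalar $v\in V$ with $JV=vV$, reducing to $I=v\,\phi^{-1}(H)$, $J=v\,\phi^{-1}(W)$ for $D$-submodules $H\subsetneq W\subseteq K$ with $H^{n}=W^{n}$ (or $W^{n}=K$ if $J=vT$); the submodule hypothesis then contradicts $I^{n}=J^{n}$. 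For (4), forward is (1); for the converse, (3) leaves the submodule condition to verify. Given $H\subsetneq W\subsetneq K$, the order $(W^{n}:W^{n})$ is a proper overring of $D$ (else $W^{n}$ would be a $K$-subspace of $K$, so $W^{n}\in\{0,K\}$, both excluded), so conducivity supplies $0\neq c\in(D:(W^{n}:W^{n}))$; multiplication by $c$ turns $H^{n}\subsetneq W^{n}$ into a strict inclusion of fractional $D$-ideals, and the big-ideal property of $D$ propagates strictness. The main obstacle is the joint valuation-conductor bookkeeping across (3) and (4); I would resolve it by iterating Remark~\ref{remark 1} along the chain $(W^{n}:W^{n})$ of overrings to get uniform control in $n$.
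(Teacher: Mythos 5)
Your proofs of (1) and (2) are sound. Part (1) is the paper's argument. Part (2) is a genuinely different and arguably cleaner route: the paper first establishes the auxiliary claim that $[k(\lambda):k]=2$ for \emph{every} $\lambda\in K\setminus k$ (using $W=k+k\lambda$ and $W^{r-1}=k(\lambda)$), then combines two such generators $\lambda,\mu$ to reach the numerical contradiction $3=\dim_k W=[k(\lambda,\mu):k]=4$; you instead pick any finite subextension $L$ of degree $d\ge 3$ and any $k$-hyperplane $W\ni 1$ of $L$, and observe $W\subsetneq W^2=L$ because $W=W^2$ would make $W$ a subfield of impossible degree $d/(d-1)$. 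This dispenses with the two-step claim and yields the contradiction from a single ideal comparison $I^2=J^2$. One small caveat: both your proof and the paper's only rule out $[K:k]\ge 3$; neither addresses the degenerate possibility $k=K$.

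Parts (3) and (4) have genuine gaps. In the converse direction of (3), the reduction ``extract $v\in V$ with $JV=vV$, so $I=v\phi^{-1}(H),\ J=v\phi^{-1}(W)$'' silently loses the case $I\subsetneq M\subsetneq J$: there $JV=V$, so $v$ is a unit of $V$, and $I=v\phi^{-1}(H)$ would contain $vM=M$, contradicting $I\subsetneq M$. The paper instead runs an explicit case analysis on whether $I$, $J$ are ideals of $V$ and on their position relative to $M$; the case you overlook is dispatched by $I^n\subseteq M^n\subseteq M\subsetneq\phi^{-1}(W^n)=J^n$. (You also share with the paper the tacit reliance on the structure theorem that an $R$-ideal not stable under $V$ has the form $a\phi^{-1}(W)$ with $a\in M$; you should at least flag it.) In (4) the argument is circular: you assert $(W^n:W^n)\subsetneq K$ ``else $W^n\in\{0,K\}$, both excluded'' — but $W^n=K$ is precisely what must be excluded, so it cannot be assumed. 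Moreover $c\in(D:(W^n:W^n))$ does not give $cW^n\subseteq D$, so the subsequent multiplication does not produce integral ideals of $D$. The paper's fix is to apply conducivity once, to $W$ itself: since $W\subsetneq K$ there is $0\ne d$ with $dW\subseteq D$, hence $dH\subsetneq dW$ are integral ideals of $D$, bigness of $D$ gives $(dH)^n\subsetneq(dW)^n\subseteq D$, and this yields both $H^n\subsetneq W^n$ and $W^n\subsetneq K$ simultaneously (the latter since $W^s=K$ would force $K=(dW)^s\subseteq D$). Your plan to ``iterate Remark~\ref{remark 1} along the chain $(W^n:W^n)$'' is neither needed nor adequately grounded.
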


\begin{proof} \1 Since $(R:T)=M$, then $T$ is a big ideal domain. Now let $A\subsetneq B$ be ideals of $D$. Then $\phi^{-1}(A)\subsetneq \phi^{-1}(B)$ are ideals of $R$ and
so for every $n\geq 1$, $\phi^{-1}(A^{n})=(\phi^{-1}(A))^{n}\subsetneq (\phi^{-1}(B))^{n}=\phi^{-1}(B^{n})$. Thus $A^{n}\subsetneq B^{n}$ and therefore $D$ is a big ideal domain.\\
\2 Assume that $K$ is algebraic over $k$. Claim: For every $\lambda\in K\setminus k$, $[k(\lambda):k]=2$. Indeed, set $r=[k(\lambda):k]$ and let $W=k+k\lambda$, $I=a\phi^{-1}(W)$ and $J=a\phi^{-1}(k(\lambda))$ for some $0\not=a\in M$. Clearly $I\subseteq J$ and since $W^{r-1}=k(\lambda)$, $I^{r-1}=J^{r-1}$. Since $R$ is a big ideal domain,  $I=J$ and so $W=k(\lambda)$. Thus $r=[k(\lambda):k]=2$. Now suppose that $k(\lambda)\subsetneq K$ and let $\mu\in K\setminus k(\lambda)$. By the Claim $[k(\mu):k]=2$. Let $W=k+k\lambda+k\mu$, $I=a\phi^{-1}(W)$ and $J=a\phi^{-1}(k(\lambda, \mu)$ for some $0\not=a\in M$. Since $W^{2}=k(\lambda, \mu)$, $I^{2}=J^{2}$. Thus $I=J$ and so $W=k(\lambda, \mu)$. Hence $3=dim_{k}W=[k(\lambda, \mu):k]=4$ which is a contradiction. It follows that $K=k(\lambda)$ and $[K:k]=2$.\\
\3 Assume that $T=V$ is a valuation domain and $R$ is a big ideal domain. By \1, $D$ is a big ideal domain. Furthermore, if there are $D$-submodules $H\subsetneq W\subsetneq K$ such that $H^{n}=W^{n}$ (or $W^{n}=K$ for some $n\geq 2$), then the ideals $I=a\phi^{-1}(H)$ and $J=a\phi^{-1}(W)$ (or $I=a\phi^{-1}(W)$ and $J=aV$) satisfy $I\subsetneq J$ but $I^{n}=J^{n}$, which is a contradiction. Hence for every $D$-submodules $H\subsetneq W\subsetneq K$, $H^{n}\subsetneq W^{n}\subsetneq K$ for every $n\geq 1$.\\
Conversely, let $I\subsetneq J$ be ideals of $R$. If $I$ and $J$ are ideals of $V$, then $I^{n}\subsetneq J^{n}$ for every $n\geq 1$ as $V$ is a big ideal domain. If $I$ is an ideal of $V$, necessarily $J$ is not an ideal of $V$. Two cases are then possible:\\
Case 1, $J\subsetneq M$. Then $J=a\phi^{-1}(W)$ for some $D$-submodule $W$ of $K$. Since $I\subsetneq J=a\phi^{-1}(W)$, $a^{-1}I\subsetneq \phi^{-1}(W)\subsetneq V$. But since $I$ is an ideal of $V$, $a^{-1}I\subseteq M$ and so $I\subseteq aM$. Thus for every $n\geq 2$, $I^{n}\subseteq (aM)^{n}=a^{n}M^{n}\subsetneq J^{n}$.\\
Case 2, $M\subsetneq J$. Then $J=\phi^{-1}(A)$ for some nonzero ideal $A$ of $D$ and so for every $s\geq 1$, $M\subsetneq J^{s}$. Since $I$ is an ideal of $V$, $I\subseteq M\subsetneq J$ and so $I^{n}\subseteq M^{n}\subseteq M\subsetneqq J^{n}$ for every $n\geq 1$, as desired.\\
Assume that $I$ is not an ideal of $V$ and set $I=b\phi^{-1}(F)$ for some $D$-submodule $F$ of $K$ and $0\not=b\in M$. If $J$ is an ideal of $V$ and $I^{n}=J^{n}$ for some $n\geq 2$,
then $b^{n}V=I^{n}V=I^{n}=J^{n}$. So $b^{n}\phi^{-1}(F^{n})=I^{n}=J^{n}=b^{n}V$. Thus $F^{n}=K$, which is absurd. Hence $I^{n}\subsetneq J^{n}$ for every $n\geq 1$.\\
Finally, assume that $J$ is not an ideal of $V$. Then $J=a\phi^{-1}(W)$ for some $D$-submodule $W$ of $K$. Since $I\subsetneq J$, we may assume that $I=a\phi^{-1}(H)$ where $H\subsetneq W\subsetneq K$. By hypothesis, $H^{n}\subsetneq W^{n}$ for every $n\geq 1$ and therefore $I^{n}\subsetneq J^{n}$ for every $n\geq 1$, as desired.\\
\4 Assume that $D$ is a conducive domain, $qf(D)=K$ and $D$ is a big ideal domain. Then for every $D$-submodules $H\subsetneq W\subsetneq K$, there is $0\not=d\in D$ such that $dH\subsetneq dW\subseteq D$ and so for every $n\geq 2$, $(dH)^{n}\subsetneq (dW)^{n}$. Thus $H^{n}\subsetneq W^{n}\subsetneq K$. By \3, $R$ is a big ideal domain. (Notice that if $W^{s}=K$ for some $s$, then $K=d^{s}W^{s}=(dW)^{s}\subseteq D$ and so $K=D$ which is absurd).
\end{proof}
\begin{corollary}\label{Pull.2} Let $R$ be a $PVD$, $M$ its maximal ideal, $k=R/M$ its residue field, $V$ its associated valuation overring and $K=V/M$. Then $R$ is a big
ideal domain if and only if $[K:k]=2$.
\end{corollary}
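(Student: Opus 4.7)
The plan is to view $R$ as a pullback of type $(\square)$ and invoke Theorem~\ref{Pull.1}. With $M$ the common maximal ideal, $V$ the associated valuation overring, $k=R/M$, and $K=V/M$, one has $R=\phi^{-1}(k)$ with $T=V$ a valuation domain and $D=k$ a field. Since a field is trivially a big ideal domain, Theorem~\ref{Pull.1}$\3$ specializes to the following criterion: $R$ is a big ideal domain if and only if for every pair of $k$-subspaces $H\subsetneq W\subsetneq K$ one has $H^{n}\subsetneq W^{n}\subsetneq K$ for every $n\geq 1$.

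For the easy direction $[K:k]=2\Longrightarrow R$ is a big ideal domain, the only proper $k$-subspaces of $K$ are $\{0\}$ and the lines $k\alpha$ with $\alpha\neq 0$. For $W=k\alpha$ one has $W^{n}=k\alpha^{n}\subsetneq K$, while the only $H\subsetneq W$ is $\{0\}$, for which $H^{n}\subsetneq W^{n}$ is trivial; so the criterion holds.

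For the converse, my strategy is to reduce to Theorem~\ref{Pull.1}$\2$, whose hypothesis is that $K/k$ is algebraic. To secure this under the big-ideal assumption I would argue by contradiction: given a transcendental $\lambda\in K$, form the proper $k$-subspace $W=k[\lambda]\subsetneq K$ (proper because a polynomial ring in a transcendental indeterminate is not a field) and let $H\subseteq W$ be the $k$-subspace spanned by $\{\lambda^{m}:m\geq 0,\ m\neq 17\}$, so that $H\subsetneq W$. Then $W^{2}=W$ because $W$ is a subring, while $H^{2}$ already contains every $\lambda^{m}$ since each exponent $m\geq 0$ admits a decomposition $m=i+j$ with $i,j\neq 17$ (take $(0,m)$ if $m\neq 17$ and $(1,16)$ if $m=17$). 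Hence $H^{2}=W^{2}$ with $H\subsetneq W$, violating the criterion from Theorem~\ref{Pull.1}$\3$. This contradiction forces $K/k$ to be algebraic, whereupon Theorem~\ref{Pull.1}$\2$ delivers $[K:k]=2$.

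The main obstacle is precisely this algebraicity step, since Theorem~\ref{Pull.1}$\2$ is stated only for algebraic $K/k$; the combinatorial ``missing exponent'' trick above is, I expect, the cleanest way to rule out a transcendental residue extension in a big-ideal PVD.
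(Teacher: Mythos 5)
Your proof is correct, and in fact it takes a genuinely different route from the paper at the one nontrivial point. Both you and the paper reduce the easy direction to Theorem~\ref{Pull.1}\3 in the same way, and both invoke Theorem~\ref{Pull.1}\2 once algebraicity of $K/k$ is secured. The difference is in how algebraicity is established. The paper argues via integral closure: it notes that $R'=\phi^{-1}(\overline{k})$ where $\overline{k}$ is the algebraic closure of $k$ in $K$, that $(R:R')=M\neq 0$, applies Corollary~\ref{GR.2} to conclude $R'$ is a Pr\"ufer domain, and then uses the (unstated, but standard) fact that a pullback $\phi^{-1}(\overline{k})$ over the valuation domain $V$ is Pr\"ufer only if $\overline{k}=K$. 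You instead produce an explicit violation of the subspace criterion in Theorem~\ref{Pull.1}\3: taking $\lambda$ transcendental, $W=k[\lambda]$, and $H$ the span of all monomials $\lambda^{m}$ with a single exponent deleted (any $m\geq 2$, not just $17$, would do), one gets $H\subsetneq W\subsetneq K$ with $H^{2}=W=W^{2}$ by the elementary observation that every nonnegative integer is a sum of two nonnegative integers each avoiding the deleted exponent. Your argument is more self-contained, since it leans only on results actually proved in the paper and avoids the pullback-integral-closure identity, at the cost of a slightly ad hoc combinatorial construction; the paper's argument is shorter and fits a recurring theme (passing to $R'$ and using Corollary~\ref{GR.2}), but quietly relies on an external fact about $R'$ in PVDs. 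Both routes are sound.
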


\begin{proof} Assume that $R$ is a big ideal domain and let $\overline{k}$ be the algebraic closure of $k$ in $K$. Then $R'=\phi^{-1}(\overline{k})$. Since $(R:R')=M$, $R'$ is a Pr\"ufer domain (Corollary~\ref{GR.2}). Thus $\overline{k}=K$ and so $K$ is algebraic over $k$. By Theorem~\ref{Pull.1}, $[K:k]=2$ as desired.\\
The converse follows easily from Theorem~\ref{Pull.1}\3.
\end{proof}


Combining Theorem~\ref{GR.1'} and Theorem~\ref{Pull.1}, we obtain the following corollary.\\
\begin{corollary} Let $A\subsetneqq B$ be an extension of integral domains, $X$ an indeterminate over $A$ and $R=A+XB[X]$ (resp. $R=A+XB[[X]]$). If $R$ is a big ideal domain, then $A$ is a big ideal domain and $B=K$ is a field.
\end{corollary}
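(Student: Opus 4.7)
The plan is to combine Theorem~\ref{GR.1'} with Theorem~\ref{Pull.1}(1), exploiting that $R = A + XB[X]$ (resp.\ $A + XB[[X]]$) sits inside the obvious pullback diagram of type $(\square)$: let $T = B[X]$ (resp.\ $T = B[[X]]$), let $M = XT$, and note that the evaluation map $T \to B$, $X \mapsto 0$, identifies $T/M$ with $B$ and pulls back $A \subseteq B$ to $R$.

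The first step is to force $B$ to be a field. I would observe that $X \cdot T \subseteq XT \subseteq R$, so $X$ lies in the conductor $(R:T)$, which is therefore nonzero. By Remark~\ref{remark 1}, $T$ inherits the big-ideal property from $R$; that is, $B[X]$ (resp.\ $B[[X]]$) is a big ideal domain. Applying Theorem~\ref{GR.1'} to the base domain $B$ then forces $B$ to be a field. Write $K := B$ from now on.

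With $B = K$ a field, $M = XT$ is genuinely a maximal ideal of $T$, the residue field $T/M$ is naturally identified with $K$, and $D := A$ is a proper subring of $K$ by the hypothesis $A \subsetneqq B$. Thus $R$ is exactly a pullback of type $(\square)$ in the sense of Section~\ref{Pull}, and Theorem~\ref{Pull.1}(1) yields that $D = A$ is a big ideal domain. The whole argument is mechanical once the conductor observation $X \in (R:T)$ is in place; the only thing to verify is that we are genuinely in the pullback framework, which reduces to the maximality of $M$ in $T$, and that in turn is precisely the statement that $B$ is a field, exactly what Theorem~\ref{GR.1'} supplies. No real obstacle beyond this chain of verifications arises.
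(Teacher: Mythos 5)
Your proof is correct and follows the same route as the paper: observe $X\in(R:T)$ so the conductor is nonzero, use Remark~\ref{remark 1} to transfer the big-ideal property to $T=B[X]$ (resp.\ $B[[X]]$), invoke Theorem~\ref{GR.1'} to conclude $B$ is a field, and then apply Theorem~\ref{Pull.1}(1) to the resulting pullback of type $(\square)$ to get that $A$ is a big ideal domain. You simply spell out the conductor computation and the pullback identification more explicitly than the paper's terse version does.
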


\begin{proof} Assume that $R$ is a big ideal domain and set $T=B[X]$ (resp. $T=B[[X]]$). Since $(R:T)\not=(0)$, $T$ is a big ideal domain (Remark~\ref{remark 1}) and by Theorem~\ref{GR.1'}, $B=K$ is a field. The conclusion follows from Theorem~\ref{Pull.1}.
\end{proof}

\section{Big and upper big ideals in rings with zero-divisors:Trivial Ring Extensions}\label{TRE}

Let $A$ be a commutative ring and $E$ an $A$-module. The trivial ring extension of $A$ by $E$ (also called the idealization of $E$ over $A$) is the ring $R=A\ltimes E$  whose underlying group is $A\times E$ with multiplication given by $(a,e)(a',e') = (aa',ae'+a'e)$.  It was introduced by Nagata \cite{Na} in order to facilitate interaction between rings and their modules as well as provide diverse contexts of rings with zero-divisors. Recall that if $I$ is an ideal of $A$ and $F$ is a submodule of $E$, then $I\ltimes F$ is an ideal of $R$ if and only if $IE\subseteq F$; however, ideals of $R$ need not be of this form \cite[Example 2.5]{KMa}. Also notice that prime (resp., maximal) ideals of R have the form $P\ltimes E$, where $P$ is a prime (resp., maximal) ideal of $A$  \cite[Theorem 3.2]{AW}.  Suitable background on commutative trivial ring extensions is \cite{AW, Gl}.  Let $L$ be an ideal of $R$ and set $I_{L}=\{a\in A| (a, e)\in L \text{ for some } e\in E\}$ and $F_{L} =\{e\in E| (a, e)\in L \text{ for some } a\in A\}$. Then a routine verification shows that $I_{L}$ is an ideal of $A$, $F_{L}$ is a submodule of $E$ with $I_{L}E\subseteq F_{L}$ (so that $I_{L}\ltimes F_{L}$ is an ideal of $R$) and $L\subseteq I_{L}\ltimes F_{L}$. Notice that since $L$ is a proper ideal of $R$, $I_{L}$ is a proper ideal of $A$. We shall refer to $I_{L}\ltimes F_{L}$ as a decomposition of $L$.\\

Before stating the main theorem of this section, we need the following useful lemma.

\begin{lemma}\label{TRE.01}
Let $A$ be a commutative ring, $E$ a simple $A$-module, $R:=A\ltimes E$, $L$ a nonzero ideal of $R$ and $I_{L}\times F_{L}$ a decomposition of $L$. Then $L^{2}=(I_{L}\ltimes F_{L})^{2}$.
\end{lemma}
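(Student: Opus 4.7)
The containment $L^2 \subseteq (I_L \ltimes F_L)^2$ is immediate from $L \subseteq I_L \ltimes F_L$, so the content of the lemma is the reverse inclusion. My plan is to exploit the simplicity of $E$ to pin down the structure of $L$ by splitting into two cases according to an auxiliary submodule, and then compute the relevant squares directly.

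First I would introduce the $A$-submodule $F'_L := \{e \in E : (0,e) \in L\}$ of $E$. Because $E$ is simple, $F'_L$ is either $0$ or $E$. If $F'_L = E$, then $\{0\} \ltimes E \subseteq L$; for each $a \in I_L$, picking any $(a, e_a) \in L$ and writing $(a,0) = (a,e_a) - (0,e_a)$ shows $(a,0) \in L$. Combining, $L = I_L \ltimes E$; since $F_L \supseteq F'_L = E$ forces $F_L = E$, we get $L = I_L \ltimes F_L$ outright, and the lemma is trivial.

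The substantive case is $F'_L = 0$. Here the first-coordinate projection $p_1 \colon L \to I_L$ is bijective, so $L$ is the graph of a map $\sigma : I_L \to E$; closure of $L$ under addition and under multiplication by elements $(b,0) \in R$ shows $\sigma$ is $A$-linear. Applying the ideal property to $(0,f)(a,\sigma(a)) = (0, fa) \in L$ then forces $fa \in F'_L = 0$ for every $f \in E$ and $a \in I_L$, that is, $I_L E = 0$. Consequently $(I_L \ltimes F_L)^2 = I_L^2 \ltimes I_L F_L = I_L^2 \ltimes 0$, while each generator of $L^2$ is $(a, \sigma(a))(a', \sigma(a')) = (aa', a\sigma(a') + a'\sigma(a)) = (aa', 0)$ by the same annihilation, so $L^2 = I_L^2 \ltimes 0$ as well, and the two ideals coincide.

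The main subtlety is recognizing that simplicity of $E$ forces the dichotomy $F'_L \in \{0, E\}$, and that in the non-split case $F'_L = 0$ the ideal condition alone compels $I_L$ to annihilate $E$. Once this collapse is spotted, both $L^2$ and $(I_L \ltimes F_L)^2$ reduce to $I_L^2 \ltimes 0$ and the equality is automatic; everything else is routine bookkeeping on the first and second coordinates.
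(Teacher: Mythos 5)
Your proof is correct, and it takes a genuinely different route from the paper's. The paper examines $F_L$ directly: since $E$ is simple, $F_L$ is $0$ or $E$, and in the case $F_L = E$ it further splits on whether $aE = 0$ or $aE = E$ for each $a \in I_L$, then verifies by explicit element computations (including a somewhat fiddly mixed case $aE = 0$, $bE = E$) that each product $(a,e)(b,f)$ lands in $L^2$. You instead look at the auxiliary submodule $F'_L = \{e : (0,e) \in L\}$, whose dichotomy $F'_L \in \{0, E\}$ reveals the actual \emph{structure} of $L$: either $L = I_L \ltimes F_L$ outright, or $L$ is the graph of an $A$-linear $\sigma : I_L \to E$ with the strong consequence $I_L E = 0$, which collapses both $L^2$ and $(I_L \ltimes F_L)^2$ to $I_L^2 \ltimes 0$. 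Your route buys a cleaner conceptual picture and avoids the case-by-case element juggling; the paper's approach is more hands-on and computational but establishes the conclusion without needing to identify the graph structure. One small point worth making explicit when you write this up: the identity $(I_L \ltimes F_L)^2 = I_L^2 \ltimes I_L F_L$ uses that products of the form $(a,0)(0,f')$ and $(a,0)(a',0)$ already generate $I_L^2 \ltimes I_L F_L$, so that the first and second coordinates can be handled independently; this is routine but should be stated.
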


\begin{proof} If $I_{L}=0$, necessarily $F_{L}=E$. Then $L^{2}\subseteq (I_{L}\ltimes F_{L})^{2}=(0\ltimes E)^{2}=0\subseteqq L^{2}$. Thus $L^{2}=(0\ltimes E)^{2}$. Assume that $I_{L}\not=0$. Let $(a, e)$ and $(b, f)$ any elements in $I_{L}\times F_{L}$ and let $g, h\in F_{L}$ such that $(a, g), (b, h)\in L$. Since $E$ is a simple $A$-module, either $F_{L}=0$ or $F_{L}=E$. If $F_{L}=0$, necessarily $e=f=g=h=0$ and so $(a, e)(b, f)=(a, 0)(b, 0)=(ab, 0)\in L^{2}$. Assume that $F_{L}=E$.\\
Claim: Let $0\not=x\in I_{L}$ such that $xE=E$. Then $(x, q)\in L$ for every $q\in E$. Indeed, let $q\in E$ and let $w\in F_{L}=E$ such that $(x, w)\in L$. Since $xE=E$, $q-w=x\mu$ for some $\mu\in E$. Then $(x, q)=(x, w)(1, \mu)\in L$, as desired. Now, three cases are then possible.\\
Case 1. $aE=bE=0$. Then $(a, e)(b, f)=(ab, 0)=(a, g)(b, h)\in L^{2}\subseteqq L$.\\
Case 2. $aE=bE=E$. By the claim, $(a, e)$ and $(b, f)$ are in $L$ and so $(a, e)(b, f)\in L^{2}$.\\
Case 3. $aE=0$ and $bE=E$. By the claim $(b, f)\in L$. Since $e, g\in E=bE$, $e=bq$ and $g=bp$ for some $q, p\in E$. Then $(a, e)(b, f)=(ab, be)=(ab, b^{2}q)=(ab, bg)+(0, b^{2}q-bg)=(ab, bg)+(0, b^{2}q-b^{2}p)$. Since $(ab, bg)=(a, g)(b, f)\in L^{2}$ and $(0, b^{2}q-b^{2}p)=(b, f)(b, f)(0, q-p)\in L^{2}$, $(a, e)(b, f)\in L^{2}$. It follows that $(I_{L}\ltimes F_{L})^{2}=L^{2}$, as desired.
\end{proof}
\begin{thm}\label{TRE.1} Let $A$ be a commutative ring, $E$ an $A$ module and $R=A\ltimes E$.\\
\1 Let $I$ be an ideal of $A$ and $F$ a submodule of $E$ such that $IE\subseteq F$. If $I\ltimes F$ is a big ideal of $R$, then $I$ is a big idealof $A$.\\
\2 Assume that $A$ is an integral domain and $E$ is a divisible $A$-module. Then every non-nilpotent ideal of $R$ is a big ideal if and only if $A$ is a big ideal domain.\\
\3 Let $I$ be an ideal of $A$ such that $IE=0$. Then $I\ltimes E$ is never a big ideal of $R$.\\
\4 Let $I$ be an ideal of $A$. Then $I\ltimes E$ is an upper big ideal of $R$ if and only if $I$ is an upper big ideal of $A$.\\
\end{thm}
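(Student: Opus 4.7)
The plan is to reduce the statement about $I\ltimes E$ in $R=A\ltimes E$ to one about $I$ in $A$, by computing powers explicitly and describing all ideals of $R$ that contain $I\ltimes E$. A direct calculation from $(a,e)(b,f)=(ab,af+be)$ shows that
\[
(I\ltimes E)^{n}=I^{n}\ltimes I^{n-1}E
\]
for every $n\geq 1$ (with the convention $I^{0}=A$). On the other hand, any ideal $L$ of $R$ containing $I\ltimes E$ automatically contains $0\ltimes E$; so setting $I_{L}=\{a\in A\mid (a,e)\in L\text{ for some }e\in E\}$, the identity $(a,0)=(a,e_{a})-(0,e_{a})$ forces $L=I_{L}\ltimes E$ for a unique ideal $I_{L}$ of $A$ containing $I$. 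Moreover $L\supsetneqq I\ltimes E$ if and only if $I_{L}\supsetneqq I$.

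For the implication $(\Leftarrow)$, suppose $I$ is upper big in $A$ and let $L=I_{L}\ltimes E\supsetneqq I\ltimes E$. Upper bigness of $I$ gives $I_{L}^{n}\supsetneqq I^{n}$ for every $n\geq 1$, whence $L^{n}=I_{L}^{n}\ltimes I_{L}^{n-1}E\supsetneqq I^{n}\ltimes I^{n-1}E=(I\ltimes E)^{n}$, the strict containment holding already in the first coordinate.

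For the implication $(\Rightarrow)$ I would argue contrapositively: assume there exist $J\supsetneqq I$ and $n\geq 1$ with $J^{n}=I^{n}$, and exhibit an ideal of $R$ showing that $I\ltimes E$ is not upper big. The crucial ingredient will be a stability lemma: \emph{if $J\supseteq I$ and $J^{n}=I^{n}$, then $J^{m}=I^{m}$ for every $m\geq n$.} To prove it, note that for any $y\in J$ one has $yI^{n-1}\subseteq J\cdot J^{n-1}=J^{n}=I^{n}$, whence $yI^{m}\subseteq I^{m+1}$ for every $m\geq n-1$, and therefore $J\cdot I^{m}=I^{m+1}$ for $m\geq n-1$; an easy induction starting at $J^{n}=I^{n}$ then gives $J^{m+1}=J\cdot J^{m}=J\cdot I^{m}=I^{m+1}$. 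Applying the lemma at $m=n+1$ yields both $J^{n+1}=I^{n+1}$ and $J^{n}E=I^{n}E$, so $L:=J\ltimes E\supsetneqq I\ltimes E$ satisfies
\[
L^{n+1}=J^{n+1}\ltimes J^{n}E=I^{n+1}\ltimes I^{n}E=(I\ltimes E)^{n+1},
\]
which shows that $I\ltimes E$ is not upper big in $R$.

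I expect the main obstacle to be precisely this stability lemma. A naive attempt with $L=J\ltimes E$ at the original exponent $n$ matches only the first coordinate of the $n$th power, since $J^{n-1}E=I^{n-1}E$ need not hold in general. The trick is to jump one power higher so that the second coordinate becomes $J^{n}E=I^{n}E$, which is automatic from $J^{n}=I^{n}$.
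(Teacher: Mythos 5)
Your proposal only addresses part (4) of the four-part theorem; parts (1), (2), and (3) are untouched. That is a genuine gap: part (1) needs the same power formula $(I\ltimes F)^{n}=I^{n}\ltimes I^{n-1}F$ applied to $J\ltimes F\subseteq I\ltimes F$; part (3) is the quick observation that $(I\ltimes 0)^{2}=I^{2}\ltimes 0=(I\ltimes E)^{2}$ whenever $IE=0$; and part (2) requires the structural fact (from Anderson--Winders) that when $A$ is a domain and $E$ is divisible, every non-nilpotent ideal of $R$ has the form $I\ltimes E$ with $I\neq 0$, at which point everything reduces to $A$. None of these is hard, but they are not in your write-up.

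For part (4) itself, your proof is correct and runs along essentially the same lines as the paper. You use the formula $(I\ltimes E)^{n}=I^{n}\ltimes I^{n-1}E$, the observation that any $L\supseteq I\ltimes E$ contains $0\ltimes E$ and hence equals $I_{L}\ltimes E$, and, in the forward direction, the trick of passing to the $(n+1)$st power so the second coordinate $J^{n}E=I^{n}E$ becomes automatic. The one substantive difference is that the paper asserts $I^{n+1}\ltimes I^{n}E=J^{n+1}\ltimes J^{n}E$ without comment, thereby tacitly invoking $J^{n+1}=I^{n+1}$, whereas you isolate and prove the stabilization lemma ($J\supseteq I$ and $J^{n}=I^{n}$ imply $J^{m}=I^{m}$ for all $m\geq n$) via the observation $J\cdot I^{m}=I^{m+1}$ for $m\geq n-1$. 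Spelling that out is a genuine improvement in clarity; the paper relies on it implicitly (it is also what underlies the cited fact from Sharma that $J^{n}=I^{n}$ makes $J$ a reduction of $I$). Your $(\Leftarrow)$ direction is also fine, arguing strict containment in the first coordinate from $I_{L}^{n}\supsetneq I^{n}$, while the paper equivalently shows $I_{L}^{n}=I^{n}$ forces $I_{L}=I$ by upper bigness; these are two phrasings of the same idea.
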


\begin{proof}  \1 Assume that $I\ltimes F$ is a big ideal of $R$ and let $J\subseteq I$ such that $J^{n}=I^{n}$ for some positive integer $n$. Since $J\ltimes F\subseteq I\ltimes F$ and 
$(J\ltimes F)^{n+1}=J^{n+1}\ltimes J^{n}F=I^{n+1}\ltimes I^{n}F=(I\ltimes F)^{n+1}$, $J\ltimes F=I\ltimes F$. Thus $J=I$ and so $I$ is a big ideal of $A$.\\
\2 First notice that every non-nilpotent ideal of $R$ is of the form $I\ltimes E$ for some nonzero ideal $I$ of $A$ (\cite[Corollary 3.4]{AW}). Moreover, for every nonzero ideals $I$ and $J$ of $A$, $(I\ltimes E)(J\ltimes E)=IJ\ltimes E$. Thus if $(I\ltimes E)\subsetneq (J\ltimes E)$, then $I\subsetneq J$. Since $A$ is a big ideal domain, $I^{n}\subsetneq J^{n}$ for every $n\geq 2$. Thus $(I\ltimes E)^{n}=I^{n}\ltimes E\subsetneq J^{n}\ltimes E=(J\ltimes E)^{n}$, as desired.\\
\2 Assume that $IE=0$. Then $(I\ltimes 0)\subsetneqq (I\ltimes E)$ and $(I\ltimes 0)^{2}=I^{2}\ltimes 0=(I\ltimes E)^{2}$, as desired.\\
\3 Assume that $I$ is an upper big ideal of $A$ and let $L$ be an ideal of $R$ such that $(I\ltimes E)^{n}=L^{n}$ for some positive integer $n$. Let $I_{L}\ltimes F_{L}$ be the decomposition of $L$. Since $I\ltimes E\subseteq L\subseteq I_{L}\ltimes F_{L}$, $E=F_{L}$. Now, let $a_{1}, \dots, a_{n}$ any arbitrary elements in $I_{L}$ and let $e_{1}, \dots, e_{n}$ in $E$ such that $(a_{i}, e_{i})\in L$ for every $i=1\dots, n$. Then $\displaystyle\prod_{i=1}^{n}(a_{i}, e_{i})\in L^{n}=(I\ltimes E)^{n}$, and so $\displaystyle\prod_{i=1}^{n}a_{i}\in I^{n}$. Thus $I_{L}^{n}=I^{n}$ and so $I=I_{L}$. Therefore $I\ltimes E\subseteq L\subseteq I_{L}\ltimes E=I\ltimes E$ and so $I\ltimes E=L$. Hence $I\ltimes E$ is an upper big ideal of $R$.\\
Conversely, assume that $I\ltimes E$ is an upper big ideal of $R$ and let $J$ be an ideal of $A$ such that $I\subseteq J$ and $I^{n}=J^{n}$ for some positive integer $n$. Then $(I\ltimes E)^{n+1}=I^{n+1}\ltimes I^{n}E=J^{n+1}\ltimes J^{n}E=(J\ltimes E)^{n+1}$, and so $I\ltimes E=J\ltimes E$. Hence $I=J$, as desired.
\end{proof}
\begin{corollary}\label{TRE.2} Let $A$ be a commutative ring, $I$ a nonzero ideal of $A$, $E=A/I$ and $R:=A\ltimes E$. Then $R$ is not a big ideal ring.
\end{corollary}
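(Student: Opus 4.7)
The plan is to appeal directly to Theorem~\ref{TRE.1}(3) by observing that the canonical choice $E = A/I$ automatically forces $I$ to annihilate $E$. The whole content of the corollary is already present in that theorem; the role of this corollary is simply to record that for any nonzero (proper) ideal $I$, the quotient $A/I$ is a concrete witness to the hypothesis $IE=0$.

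First I would verify that the hypothesis of Theorem~\ref{TRE.1}(3) is met. Since $E = A/I$, every element of $I$ acts as zero on $E$, so $IE = I \cdot (A/I) = 0$. Next I would confirm that $I \ltimes E$ is a proper ideal of $R$, which requires $(1,0) \notin I \ltimes E$, i.e., $1 \notin I$. This is automatic provided $I$ is a proper ideal of $A$. (The degenerate case $I = A$ is excluded implicitly: there $E = 0$ and $R \cong A$, so the corollary clearly cannot hold in general, e.g.\ when $A$ is a field. The statement is therefore to be read with ``nonzero ideal'' meaning ``proper nonzero ideal''.) Finally I would invoke Theorem~\ref{TRE.1}(3) to conclude that $I \ltimes E$ is not a big ideal of $R$, so $R$ is not a big ideal ring.

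If a self-contained argument is preferred (bypassing the reference), one repeats the short computation behind Theorem~\ref{TRE.1}(3): put $J := I \ltimes 0$. Because $E \ne 0$, one has $J \subsetneqq I \ltimes E$, while for every $n \ge 2$
\[
J^n \;=\; I^n \ltimes 0 \;=\; I^n \ltimes I^{n-1}E \;=\; (I \ltimes E)^n,
\]
the middle equality using $IE = 0$. Hence $I \ltimes E$ has a strictly smaller subideal whose $n$\nobreakdash-th powers coincide with its own for every $n \ge 2$, so it fails the big ideal condition.

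The hard part, if there is one, is merely bookkeeping around the degenerate case $I=A$; the mathematical content is nil once $IE=0$ is noticed. No genuine obstacle arises.
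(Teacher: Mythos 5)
Your proof is correct and follows essentially the same route as the paper: observe $IE=0$ and exhibit $I\ltimes 0$ as a proper subideal of $I\ltimes E$ with the same square (the paper writes this out directly, you invoke Theorem~\ref{TRE.1}(3), which is the same computation). Your aside about excluding the degenerate case $I=A$ (so that $E\neq 0$) is a reasonable reading that the paper leaves implicit.
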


\begin{proof} Since $IE=0$, $I\ltimes 0$ is a proper subideal of $I\ltimes E$, but $(I\ltimes 0)^{2}=I^{2}\ltimes 0=(I\ltimes E)^{2}$. Thus $R$ is not a big ideal ring.
\end{proof}
A special case of trivial ring extension that is often studied is the trivial ring extension $R:=A\ltimes (A/M)$ where $A$ is a commutative ring and $M$ is a maximal ideal of $A$. One can put this type of trivial ring extension in a more general context of $R=A\ltimes E$ where $E$ is an $A$-module with $ME=0$ ($M$ is a maximal ideal of $A$). The next theorem characterizes big and upper big ideals in this special case of trivial ring extension, and lead us to construct upper big ideals that are not big ideals in rings with zero-divisors.  The key is the fact that for such a trivial ring extension, $L^{2}=(I_{L}\ltimes F_{L})^{2}$ for every ideal $L$ of $R$ and $I_{L}\times F_{L}$ the decomposition of $L$.\\

\begin{thm}\label{TRE.4}
Let $A$ be a commutative ring, $M$ a maximal ideal of $A$, $E$ an $A$-module such that $ME=0$, $R:=A\ltimes E$, $I$ an ideal of $A$ and $L$ a nonzero ideal of $R$. Then:\\
\1 $L^{2}=(I_{L}\ltimes F_{L})^{2}$. In particular $L$ is an upper big ideal of $R$ if and only if $F_{L}=E$ and $I_{L}$ is an upper big ideal of $A$.\\
\2 Assume that $I\not\subseteq M$. Then $I\ltimes E$ is a big ideal of $R$ if and only if $I$ is a big ideal of $A$.\\
\3 Assume that $I\subseteq M$. Then $I\ltimes E$ is not a big ideal.
\end{thm}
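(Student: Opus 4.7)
The crux is the identity $L^2=(I_L\ltimes F_L)^2$ in part (1), from which the ``in particular'' statement is deduced. The inclusion $L^2\subseteq (I_L\ltimes F_L)^2$ is immediate since $L\subseteq I_L\ltimes F_L$, so the work is in the reverse inclusion, which I would handle by splitting on whether $I_L\subseteq M$. If $I_L\subseteq M$, then $ME=0$ gives $I_L F_L=0$, hence $(I_L\ltimes F_L)^2=I_L^2\ltimes 0$; for any $a,b\in I_L$ with $(a,g),(b,h)\in L$, the product $(a,g)(b,h)=(ab,ah+bg)=(ab,0)$ generates all of $I_L^2\ltimes 0$ inside $L^2$. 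If $I_L\not\subseteq M$, pick $i_0\in I_L\setminus M$; since $\overline{i_0}$ is a unit in the field $A/M$ and $E$ is an $A/M$-module, $i_0E=E$. Adapting the claim in Lemma~\ref{TRE.01}, for any $(i_0,g_0)\in L$ and $e\in E$ I would solve $e-g_0=i_0\mu$ and observe $(i_0,e)=(i_0,g_0)(1,\mu)\in L$; taking differences gives $0\ltimes E\subseteq L$, and then $L=I_L\ltimes E=I_L\ltimes F_L$, trivializing the desired equality.

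For the ``in particular'' part: the equality $L^2=(I_L\ltimes F_L)^2$ together with $L\subseteq I_L\ltimes F_L$ forces $L=I_L\ltimes F_L$ whenever $L$ is upper big. If also $F_L\subsetneq E$, then $I_L\subseteq M$ (the case $I_L\not\subseteq M$ was already shown to force $F_L=E$), so $I_L E=0=I_L F_L$ and the ideal $L':=I_L\ltimes E$ strictly contains $L$ with $L'^2=L^2$, contradicting the upper big property; hence $F_L=E$. For $I_L$ upper big in $A$: given $I_L\subsetneq I'$ with $I_L^n=I'^n$, set $L':=I'\ltimes E\supsetneq L$; using the formula $(I\ltimes E)^n=I^n\ltimes I^{n-1}E$ together with the observation that $I_L\subseteq M$ iff $I'\subseteq M$ (since $M$ is prime and $I_L^n=I'^n$), a direct check gives $L^n=L'^n$, again contradicting $L$ upper big. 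The converse direction reverses this, using the identity $I_{L^n}=I_L^n$ (which holds for any ideal $L$ of a trivial extension) to transport the upper big property of $I_L$ back to $L$.

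Part (2) follows a similar pattern. Under $I\not\subseteq M$ one has $IE=E$ and hence $(I\ltimes E)^n=I^n\ltimes E$ for $n\geq 1$. If $I$ is big and $L\subsetneq I\ltimes E$ satisfies $L^n=(I\ltimes E)^n$, the first-coordinate identity yields $I_L^n=I^n$, so $I_L=I$ (by $I$ big); then $I_L\not\subseteq M$ and the analysis of part (1) give $L=I\ltimes E$, a contradiction. Conversely, if $J\subsetneq I$ with $J^n=I^n$, then $J\not\subseteq M$ (else $J^n\subseteq M$ while $I^n\not\subseteq M$, since $M$ is prime and $I\not\subseteq M$), so $(J\ltimes E)^n=J^n\ltimes E=(I\ltimes E)^n$ despite $J\ltimes E\subsetneq I\ltimes E$, contradicting $I\ltimes E$ being big. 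Part (3) is handled by an explicit witness: if $I\subseteq M$ then $IE=0$, so $L:=I\ltimes 0$ is a proper subideal of $I\ltimes E$ satisfying $L^n=I^n\ltimes 0=(I\ltimes E)^n$ for every $n\geq 2$.

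The main obstacle lies in the converse direction of the upper big characterization in part (1): showing that $F_L=E$ and $I_L$ upper big together prevent $L^n=L'^n$ for any $L'\supsetneq L$ requires ruling out the possibility that both $L$ and $L'$ share the same envelope $I_L\ltimes F_L$ while differing as ideals, and this delicate step is what the identities $L^2=(I_L\ltimes F_L)^2$ and $I_{L^n}=I_L^n$ must be leveraged against.
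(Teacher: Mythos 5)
Your proof of the identity $L^2=(I_L\ltimes F_L)^2$ takes a genuinely different, and cleaner, route than the paper. The paper argues element-by-element, fixing $(a,e),(b,f)\in I_L\ltimes F_L$ and splitting into four cases according to whether $a$ and $b$ lie in $M$; the cases involving an element outside $M$ require choosing relations $1=abd+t$ or $1=b\alpha+m$ and some bookkeeping. You instead split at the level of the ideal $I_L$: when $I_L\subseteq M$ the square collapses to $I_L^2\ltimes 0$ and the containment is immediate from products of pairs in $L$; when $I_L\not\subseteq M$ you adapt the unit-multiplication trick of Lemma~\ref{TRE.01} to show $0\ltimes E\subseteq L$, whence $L=I_L\ltimes F_L$ outright and the equality of squares is trivial. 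This is shorter, and the observation that $I_L\not\subseteq M$ forces $L=I_L\ltimes E$ is reused profitably in your treatment of part (2). Your arguments for parts (2), (3), and the forward implication of the ``in particular'' agree in substance with the paper.

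The obstacle you flag at the end is real, and in fact fatal to the converse implication of the ``in particular'' as stated. Take $A=k[[t]]$, $M=tA$, $E=A/M=k$, $R=A\ltimes E$, and let $L=R\cdot(t,1)=\{(at,\bar{a}):a\in A\}$. Then $I_L=tA$, which is upper big in the DVR $A$, and $F_L=k=E$; yet $(t,0)\notin L$ (since $at=t$ forces $a=1$ and then $\bar{a}=1\neq 0$), so $L\subsetneq I_L\ltimes E=tA\ltimes k$, while $L^2=t^2A\ltimes 0=(tA\ltimes k)^2$ by part (1). Hence $L$ is \emph{not} an upper big ideal of $R$ even though $F_L=E$ and $I_L$ is upper big, so the ``if'' direction of the biconditional fails. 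The characterization is only correct with the additional hypothesis $L=I_L\ltimes F_L$ (equivalently $L=I_L\ltimes E$). Notably, the paper's own proof establishes only the forward implication: it derives $L=I_L\ltimes E$ \emph{from} the assumption that $L$ is upper big, and only then invokes Theorem~\ref{TRE.1}(4); the cited equivalence between $I_L$ and $I_L\ltimes E$ says nothing about an $L$ properly contained in $I_L\ltimes E$. Your instinct that one must ``rule out the possibility that $L$ and $L'$ share the same envelope while differing as ideals'' is exactly right, and the example above shows that this possibility cannot be ruled out, so no amount of leveraging $I_{L^n}=I_L^n$ will close the gap.
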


\begin{proof} \1 Let $L$ be a nonzero ideal of $R$ and $I_{L}\times F_{L}$ be a decomposition of $L$. If $I_{L}=(0)$, then $L^{2}=(I_{L}\ltimes F_{L})^{2}=(0)$. So without loss of generality we may assume that $I_{L}\not=(0)$. Let $(a, e), (b, f)\in I_{L}\ltimes F_{L}$, and let $g, h\in F_{L}$ such that $(a, g), (b, h)\in L$. Four subcases are possible:\\

Case 1 $a$ and $b$ are in $M$. Then $(a, f)(b, f)=(ab, 0)=(a, g)(b, h)\in L^{2}$.\\

Case 2 $a\not\in M$ and $b\not\in M$. Then $ab\not\in M$, and so there are $d\in A$ and $t\in M$ such that $1=abd+t$. Since $ME=0$, $[(af+be)-(ah+bg)]t=0$ and so $(af+be)-(ah+bg)=abd[(af+be)-(ah+bg)]$. Set $\mu=d[(af+be)-(ah+bg)]\in F_{L}$. Then $ab\mu=(af+be)-(ah+bg)$ and clearly $(a, e)(b, f)=(ab, af+be)=(ab, ah+bg)(1, \mu)=(a, g)(b, h)(1, \mu)\in L^{2}$.\\

Case 3 $a\in M$ and $b\not\in M$. Let $\alpha\in A$, $m\in M$ such that $b\alpha+m=1$. Clearly, $(a, e)(b, f)=(ab, be)$ and $(a, g)(b, h)=(ab, bg)$ (since $af=ah=0$ as $a\in M$). Notice that $(0, be)=(b, h)(0, e)\in L$. Also since $ME=0$, $be=b^{2}\alpha e$, and so $(0, be)=(0, b^{2}\alpha e)=(0, be)(b, h)(\alpha, 0)\in L^{2}$. Since $(ab, be)=(ab, 0)+(0, be)$ and $(0, be)\in L^{2}$, all what we need is to prove that $(ab, 0)\in L^{2}$. First we have $ab=ab^{2}\alpha+abm$. So $(ab, 0)=(ab^{2}\alpha, 0)+(abm, 0)$. Since $(abm, 0)=(ab, bg)(m, 0)=(a, g)(b, h)(m, 0)\in L^{2}$, we need to show that $(ab^{2}\alpha, 0)\in L^{2}$. For this, clearly $I_{L}\not\subseteq M$ (since $b\in I_{L}\setminus M$), and so $I_{L}+M=A$. Let $c\in I_{L}$, $d\in M$ such that $c+d=1$ and let $q\in F_{L}$ such that $(c, q)\in L$. Then $ab^{2}\alpha=ab^{2}\alpha c+ab^{2}\alpha d$. So $(ab^{2}\alpha, 0)=(ab^{2}\alpha c, 0)+(ab^{2}\alpha d, 0)=(b, h)(c, q)(ab\alpha, 0)+(ab, bg)(b\alpha d, 0)\in L^{2}$ (since $ab$ and $d$ are in $M$, and so $abE=dE=0$).\\

Case 4 $a\not\in M$ and $b\in M$. Just change the role of $a$ and $b$ in case 3.\\

Thus, in all cases, $(a, e)(b, f)\in L^{2}$. Hence $L^{2}=(I_{L}\ltimes F_{L})^{2}$, as desired.\\

Now, Assume that $L$ is an upper big ideal of $R$. Then $L=I_{L}\ltimes F_{L}$. If $I_{L}\not\subseteq M$, then $I_{L}+M=A$. Let $b\in I_{L}$ and $m\in M$ such that $1=b+m$. Then for every $e\in E$, $e=1.e=be+me=be\in I_{L}E\subseteq F_{L}$ (since $me\in ME=0$). Thus $E=F_{L}$. If $I_{L}\subseteq M$, then $L^{2}=(I_{L}\ltimes F_{L})^{2}=I_{L}^{2}\ltimes 0=(I_{L}\ltimes E)^{2}$. Thus $L=I_{L}\ltimes F_{L}=I_{L}\ltimes E$ and so $E=F_{L}$. Therefore $L=I_{L}\ltimes E$.\\
Now, $I_{L}\ltimes E$ is an upper big ideal of $R$ if and only if $I_{L}$ is an upper big ideal of $A$ follows from Theorem~\ref{TRE.1}(4).\\

\2 Assume that $I\not\subseteq M$ and $I$ is a big ideal of $A$. Since $I+M=A$, for every submodule $F$ of $E$, $F=AF=(I+M)F=IF$ (as $MF=0$). So by induction on $s$, $F=I^{s}F$ for every positive integer $s$. Let $L\subseteq I\ltimes E$ such that $L^{n}=(I\ltimes E)^{n}$ for some positive integer $n$. Since $L\subseteq I_{L}\ltimes F_{L}\subseteq I\ltimes E$, $I^{n}\ltimes E=I^{n}\ltimes I^{n-1}E=(I\ltimes E)^{n}=L^{n}=(I_{L}\ltimes F_{L})^{n}=I_{L}^{n}\ltimes I_{L}^{n-1}F_{L}=I_{L}^{n}\ltimes F_{L}$. Then $I^{n}=I_{L}^{n}$ and so $I=I_{L}$. Therefore $E=F_{L}$. Now, let $(a, e)\in I\ltimes E$ and let $f\in E$ such that $(a, f)\in L$. Let $c\in I$ and $m\in M$ such that $1=c+m$ (since $I+M=A$). Let $g\in E$ such that $(c, g)\in L$. Clearly $e-f=1.(e-f)=(c+m)(e-f)=c(e-f)$ (as $m(e-f)=0$) and so $(0, e-f)=(0, c(e-f))=(c, g)(0, e-f)\in L$. Thus $(a, e)=(a, f)+(0, e-f)\in L$ and so $I\ltimes E=L$. Therefore $I\ltimes E$ is a big ideal of $R$, as desired.\\
\3 Follows from Theorem~\ref{TRE.1}(3) since $IE=0$. 
\end{proof}

We close this section by the following remark.

\begin{remark} \1 If $R$ is a commutative ring (with zero-divisors) and $I$ is a non-zero nilpotent ideal of $R$ (that is, $I^{n}=0$ for some positive integer $n$), then $I$ is not a big ideal of $R$. However, a nonzero nilpotent ideal may be an upper big ideal (see Example~\ref{Example 7}).\\
\2 If $A$ and $B$ are commutative rings and $R=A\times B$, then $R$ is a big ideal ring if and only if $A$ and $B$ are big ideal rings. In particular, if $A$ and $B$ are big ideal domains, then $R$ is a big ideal ring with zero-divisors but with no nonzero nilopotent ideals.
\end{remark} 
\section{examples}\label{Ex}

In \cite[Theorem 8.3]{Sharma}, it was proved that a maximal ideal in a regular ring is a big ideal. The following example shows that a maximal ideal which is a strongly stable ideal in a Noetherian domain $R$ is not necessarily a big ideal.\\

\begin{example}\label{Ex.1} \1 Let $\mathbb{Q}$ be the field of rational numbers, $X$ an indeterminate over $\mathbb{Q}$, $R=\mathbb{Q}+X\mathbb{Q}(\sqrt{2},
\sqrt{3})[[X]]=\mathbb{Q}+M$, where $M=X\mathbb{Q}(\sqrt{2},\sqrt{3})[[X]]$ and $V=\mathbb{Q}(\sqrt{2}, \sqrt{3})+M$. Clearly $M$ is a strongly stable ideal of $R$. Let $I$ be the ideal of $R$ given by $I=X(\mathbb{Q}+\sqrt{2}\mathbb{Q}+\sqrt{3}\mathbb{Q}+M)$. Then $I\subsetneq M$, but $I^{2}=M^{2}$. Thus $M$ is not a big ideal.
\end{example}

\begin{example} Let $k$ be a field, $X$ an indeterminate over $k$ and $R=k[[X^{2}, X^{3}]]$. Then $R$ is a one-dimensional local Noetherian divisorial domain (\cite[Theorem A]{Ba2} or \cite[Theorem 2.1]{Mi}) which is strongly stable, and so a big ideal domain. Indeed, let $M=(X^{2}, X^{3})R$ be the maximal ideal of $R$ and $I$ a nonzero ideal of $R$. If $II^{-1}=R$, then $I$ is a prncipal ideal of $R$ and so we are done. Assume that $II^{-1}\subseteq M$. Then $k[[X]]=M^{-1}\subseteq (II^{-1})^{-1}=(I_{v}:I_{v})=(I:I)\subseteq R'=k[[X]]$. Thus $(I:I)=k[[X]]$ and so $I$ is a principal ideal of $k[[X]]$. Thus $I$ is a strongly stable ideal of $R$. Therefore $R$ is a strongly stable domain and so a big ideal domain by Proposition~\ref{stable}.
\end{example}
\begin{example} A one-dimensional local Noetherian divisorial domain need not be a big ideal domain. Indeed,  let $k$ be a field, $X$ an indeterminate over $k$ and $R=k[[X^{3}, X^{4}]]$. Then $R$ is a one-dimensional local Noetherian divisorial domain (\cite[Theorem A]{Ba2} or \cite[Theorem 2.1]{Mi})  which is not a big ideal domain. Indeed, let $M=(X^{3}, X^{4})$ be the maximal ideal of $R$. Clearly $T=M^{-1}=k[[X^{3}, X^{4}, X^{5}]]$. Let $N=(X^{3}, X^{4}, X^{5})T=X^{3}k[[X]]$ be the maximal ideal of $T$ and $I=(X^{3}, X^{4})T$. Clearly $I\subsetneqq N$ and $I^{2}=N^{2}=X^{6}k[[X]]$. Thus $N$ is not a big ideal of $T$ and so $T$ is not a big ideal domain. By Remark~\ref{remark 1}, $R$ is not a big ideal domain since $(R:T)=M$. 
\end{example}
\begin{example} The following example illustrates Theorem~\ref{Pull.1}\1. It shows that  if $D$ and $T$ are big ideal domains, then $R$ is not necessarily a big ideal domain even in the case where $qf(D)=K$.\\  
Let $\mathbb{Q}$ be the field of rational numbers, $X$ an indeterminate over $\mathbb{Q}$, $T=\mathbb{Q}(\sqrt{2})+X\mathbb{Q}(\sqrt{2},
\sqrt{3})[[X]]=\mathbb{Q}(\sqrt{2})+M$ and $R=\mathbb{Z}[\sqrt{2}]+M$. Since $T$ is a $PVD$ (with associated valuation overring $V=\mathbb{Q}(\sqrt{2},
\sqrt{3})[[X]]=\mathbb{Q}(\sqrt{2}, \sqrt{3})+M$), $T$ is a big ideal domain by Corollary~\ref{Pull.2}. Clearly
$D=\mathbb{Z}[\sqrt{2}]$ is a big ideal domain and $qf(D)=K=T/M$. However $R$ is not a big ideal domain. Indeed, consider the ideals $I=X(\mathbb{Q}+\sqrt{2}\mathbb{Q}+\sqrt{3}\mathbb{Q}+M)\subsetneq M$. But $I^{2}=M^{2}$.
\end{example}

\begin{example} The following example illustrates Theorem~\ref{TRE.1}.\\
 Let $A=\mathbb{Z}$, $E=\mathbb{Z}_{2}=\mathbb{Z}/2\mathbb{Z}$, $I=2\mathbb{Z}$. Then, $I$ is a big ideal of $A$ but $I\ltimes E$ is not a big ideal.\\
\end{example}
\begin{example} The following example illustrates Theorem~\ref{TRE.4}.
Let $k$ be a field, $X$ and $Y$ indeterminates over $k$, $V=k(X)[[Y]]=k(X)+M$, $A=k+M$, $E=V/M$, $F=A/M$ and $R=A\ltimes E$. Clearly $M\ltimes F\subsetneqq M\ltimes E$ and $(M\ltimes F)^{2}=M^{2}\ltimes 0=(M\ltimes E)^{2}$. Thus $M\ltimes E$ is not a big ideal. However $M\ltimes E$ is an upper big ideal of $R$ as it is a mxaimal ideal.
\end{example}
\begin{example}\label{Example 7} A non-zero nilpotent ideal $I$  of a commutative ring $R$ is not a big ideal of $R$. However, it may be an upper big ideal. Indeed, Let $A=k$ be a field, $E$ a one-dimensional vector space over $k$ and $R:=k\ltimes E$. Since $dim_{k}E=1$, $E$ is a simple $A$-module, and clearly the only nonzero ideal of $R$ is $0\ltimes E$. As a maximal ideal of $R$, $0\ltimes E$ is an upper big ideal of $R$ which is not a big ideal (since it is a non-zero nilpotent ideal).
\end{example}
\section{\bf Declarations}

\begin{itemize}

\item{\bf Funding}  This research is supported by KFUPM under DSR Research Grant: SB191014

\item{\bf Conflict of interest/Competing interests}: The author states that there is no conflict of interest.

\item{\bf Availability of data and material}: Data sharing not applicable to this article as no datasets were generated or analysed during the current study

\end{itemize}
\noindent{\bf Acknowledgment}. The author would like to express his sincere thanks to Professor Muhammad Zafrullah for correcting the proofs of Theorems 2.7 and 2.11 in the earlier version.

\end{document}